\documentclass[11pt,oneside]{article}
\usepackage{amssymb}
\usepackage{amsmath}
\usepackage{amsthm}

\usepackage{wasysym}
\usepackage{hyperref}
\usepackage{color}
\usepackage{fmtcount}
\usepackage{tikz}
\usetikzlibrary{scopes,shapes}
\usetikzlibrary{decorations.pathreplacing}
\include{longdiv}
\usepackage{multirow}
\usepackage[top=1in, bottom=1in, left=1.5in, right=1in]{geometry}
\usepackage{titlesec}
\usepackage{tocloft}
\usepackage{setspace}
\usepackage{tocloft}
\usepackage{chngcntr}

\hypersetup{colorlinks=true, linkcolor=red}

\theoremstyle{plain}
\newtheorem{theorem}{Theorem}

\newtheorem{lemma}[theorem]{Lemma}

\newtheorem{proposition}[theorem]{Proposition}

\newtheorem{corollary}[theorem]{Corollary}

\newtheorem{question}{Question}
\newtheorem{problem}[question]{Problem}

\usepackage{hyperref}
\usepackage{cleveref}
\usepackage{thmtools}
\usepackage{thm-restate}
\usepackage{setspace}
\usepackage{verbatim}
\usepackage{mathrsfs}
\usetikzlibrary{arrows, automata, positioning}
\usepackage{adjustbox,tikz,calc,graphics,standalone}
\usepackage[makeroom]{cancel}

\usepackage{enumerate}

\usetikzlibrary{shapes.misc,calc,intersections,patterns,decorations.pathreplacing}
\usetikzlibrary{arrows,shapes,positioning}
\usetikzlibrary{decorations.markings}
\usetikzlibrary{graphs,graphs.standard,chains,fit}

\newcommand{\comp}[1]{\overline{#1}}

\newcommand{\ldim}[1]{\operatorname{ldim}\left(#1\right)}

\iftrue
\newcommand{\fdim}[1]{\operatorname{dim^\star}\left(#1\right)}
\newcommand{\fldim}[1]{\operatorname{ldim^\star}\left(#1\right)}
\newcommand{\ftdim}[2]{\operatorname{dim^\star_{#1}}\left(#2\right)}
\newcommand{\fltdim}[2]{\operatorname{ldim^\star_{#1}}\left(#2\right)}
\newcommand{\ftwodim}[1]{\operatorname{dim^\star_2}\left(#1\right)}
\newcommand{\fltwodim}[1]{\operatorname{ldim^\star_2}\left(#1\right)}
\fi

\newcommand{\pdim}[1]{\operatorname{dim}\left(#1\right)}
\newcommand{\twodim}[1]{\operatorname{dim}_2\left(#1\right)}
\newcommand{\ltwodim}[1]{\operatorname{ldim}_2\left(#1\right)}
\newcommand{\ltdim}[2]{\operatorname{ldim}_{#1}\left(#2\right)}
\newcommand{\tdim}[2]{\operatorname{dim}_{#1}\left(#2\right)}
\newcommand{\lbc}[1]{\operatorname{lbc}\left(#1\right)}
\newcommand{\ldc}[1]{\operatorname{ldc}\left(#1\right)}

\newcommand{\cube}[2]{\mathcal{Q}^{#1}_{#2}}

\DeclareMathOperator{\N}{\mathbb{N}}
\DeclareMathOperator{\R}{\mathbb{R}}

\DeclareMathOperator{\Expect}{\mathbb{E}}
\DeclareMathOperator{\dom}{\operatorname{dom}}

\begin{document}

\def\ver{1}
\def\comp{1}
\def\compstrict{2}
\def\tikh{3}

\title{Local $t$-dimension}
\author{David Lewis \\{\small Department of Mathematical Sciences} \\ {\small University of Memphis} \\ {\small\tt davidcharleslewis@outlook.com}}

\maketitle

\begin{abstract}
In this note, we introduce a new poset parameter called local $t$-dimension. We also discuss the fractional variants of this and other dimension-like parameters.
\end{abstract}

\section{Introduction}

In this note, we introduce a new poset parameter called the \emph{local $t$-dimension}, where $t$ is an integer greater than or equal to $2$. This new parameter combines the notions of $t$-dimension, first defined by Nov\'{a}k~\cite{novak}, with local dimension, first defined by Ueckerdt~\cite{ueckerdt}, which are both variants of the notion of order dimension introduced by Dushnik and Miller~\cite{dm}.

In this note, all posets are assumed to be nonempty by definition. All logarithms are base $2$ unless otherwise specified. For each integer $n$, boldface $\mathbf{n}$ denotes an $n$-element chain and $A_n$ denotes an $n$-element antichain.

The $n$-dimensional Boolean lattice, defined as the set of all subsets of $[n]$ ordered by inclusion, is denoted $\cube{n}{}$. The suborder of $\cube{n}{}$ induced by the $\ell^\textrm{th}$ and $k^\textrm{th}$ layers (i.e., $\big([n]^{(\ell)}\cup[n]^{(k)},\subseteq\big)$) is called $\cube{n}{\ell,k}$.

We will make use of some nonstandard definitions of dimension-theoretic concepts. However, all our definitions are easily seen to be equivalent to the standard ones.

 Given a poset $P$, a \emph{local realiser} of $P$ is a set $\mathcal{L}$ of monotone partial functions from $P$ to a chain $C$ such that, for every $x$ and $y$ in $P$ with $x \not\geq y$, there is a partial function $f\in\mathcal{L}$ such that $x,y\in\dom(f)$ and $f(x) < f(y)$. A local realiser of $P$ is called a \emph{realiser} of $P$ if all of its elements are total functions, i.e., functions whose domains are all of $P$. Given an integer $t \geq 2$, a local realiser $\mathcal{L}$ of $P$ is called a \emph{local $t$--realiser} of $P$ if the codomain of every partial function in $\mathcal{L}$ is the $t$-element chain $\mathbf{t}$. A $t$--realiser of $P$ is a local $t$--realiser that is also a realiser.


 The \emph{dimension} of a poset $P$, denoted $\pdim{P}$. is defined as the minimum cardinality of a realiser of $P$. For any integer $t\geq 2$, the \emph{$t$--dimension} of $P$, denoted $\tdim{t}{P}$, is the minimum cardinality of a $t$--realiser of $P$. The $t$--dimension of $P$ is monotone decreasing in $t$, and, for finite posets $P$, the minimum value of $\tdim{t}{P}$ over all values of $t$ is equal to $\pdim{P}$. The most interesting case of $t$--dimension is $2$--dimension, as $\twodim{P}$ is equal to the smallest $d$ such that $P$ embeds into $\cube{d}{}$ as a suborder.

 Given a local realiser $\mathcal{L}$ of $P$ and a point $x\in P$, the multiplicity of $x$ in $\mathcal{L}$, denoted $\mu_\mathcal{L}(x)$, is defined as the number of partial functions $f\in\mathcal{L}$ such that $x\in\dom(f)$. The local dimension of $P$, denoted $\ldim{P}$, is the minimum over all local realisers $\mathcal{L}$ of $P$ of $\max\left\{\mu_\mathcal{L}(x) : x\in P\right\}$. For any integer $t\geq 2$, the local $t$--dimension of $P$, denoted $\ltdim{t}{P}$, is defined as the minimum over all local $t$--realisers $\mathcal{R}$ of $P$ of $\max\left\{\mu_\mathcal{R}(x) : x\in P\right\}$. Similar to the case with $t$--dimension, local $t$--dimension is monotone decreasing in $t$, and the minimum value of $\ltdim{t}{P}$ over all values of $t$ is (when $P$ is finite) equal to $\ldim{P}$. As with $t$--dimension, we will usually consider the case where $t=2$.

 The following inequalities follow immediately from the definitions, and hold for all posets $P$ and all choices of $t$.
\begin{equation}\label{ineq:1}
\ldim{P} \leq \ltdim{t}{P} \leq \tdim{t}{P},
\end{equation}
\begin{equation}\label{ineq:2}
\ldim{P} \leq \pdim{P} \leq \tdim{t}{P}.
\end{equation}
As we will see, dimension and local $t$--dimension are incomparable, and there exist posets of bounded dimension and arbitrarily large local $t$--dimension. However, local $t$--dimension is bounded below by a logarithmic function of dimension:
\begin{align}\label{ineq:logbound}
\ltdim{t}{P} \geq \log_t \left(2\pdim{P}-1\right).
\end{align}

 A poset parameter $f$ is called \emph{monotone} if, for every poset $Q$ and every suborder $P$ of $Q$, we have $f(P) \leq f(Q)$. It is called \emph{subadditive} if, for all posets $P$ and $Q$, we have $f(P\times Q) \leq f(P) + f(Q)$. Dimension, local dimension, and $t$--dimension are all monotone and subadditive; see \cite{dm}, \cite{localdim}, and \cite{trotter}, respectively, for the proofs. We will now show that this is true for local $t$--dimension as well.

 To prove monotonicity, let $\mathcal{R}$ be a local $t$--realiser of $Q$ and $P$ a suborder of $Q$. Since $\mathcal{R}\mid_P = \{f\mid_P:f\in\mathcal{R}\}$ is a local $t$--realiser of $P$ whose maximum multiplicity is at most that of $\mathcal{R}$, $\ltdim{t}{P}\leq \ltdim{t}{Q}$.

 For subadditivity, let $P$ and $Q$ be posets and let $\mathcal{R}$ and $\mathcal{S}$ be local $t$--realisers of $P$ and $Q$ respectively. Let $\mathcal{T} = \left\{f\circ\pi_P : f\in\mathcal{R}\right\} \cup \left\{f\circ\pi_Q : f\in\mathcal{S}\right\}$, where $\pi_P$ and $\pi_Q$ are the projection maps from $P\times Q$ onto $P$ and $Q$ respectively. Then $\mathcal{T}$ is a local $t$--realiser of $P\times Q$ and, for every $(x,y)$, $\mu_\mathcal{T}(x,y) = \mu_\mathcal{R}(x) + \mu_\mathcal{S}(y)$.

\section{Bounds on local $t$--dimension}
 Recall that $\tdim{t}{P}$ is equal to the smallest cardinal $d$ such that $P$ embeds into $\mathbf{t}^d$ as a suborder. We therefore have the trivial bound $\tdim{t}{P} \geq \log_t |P|$ due to the pigeonhole principle. Our first theorem shows that the same bound holds for local $t$--dimension.
\begin{theorem}\label{thm:lowerbound}
For every poset $P$ with cardinality $n$, $\ltdim{t}{P} \geq \log_t n$.
\end{theorem}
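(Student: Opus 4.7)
The plan is to adapt the pigeonhole proof of the analogous bound for $t$-dimension using a counting argument reminiscent of Kraft's inequality. Let $\mathcal{R}$ be a local $t$-realiser of $P$ whose maximum multiplicity equals $d := \ltdim{t}{P}$; it suffices to show $n \leq t^d$. For each $x \in P$, set $S_x = \{f \in \mathcal{R} : x \in \dom(f)\}$, so that $|S_x| \leq d$.

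Now choose a function $v \colon \mathcal{R} \to [t]$ uniformly at random, with the values $v(f)$ independent across $f \in \mathcal{R}$. For each $x \in P$, let $A_x$ be the event that $v(f) = f(x)$ for every $f \in S_x$; then $\Pr[A_x] = t^{-|S_x|}$. The key claim is that the events $\{A_x\}_{x \in P}$ are pairwise disjoint. Indeed, suppose $x \neq y$; by symmetry we may assume $x \not\geq y$, so the local realiser property yields some $f^{\ast} \in S_x \cap S_y$ with $f^{\ast}(x) < f^{\ast}(y)$. If both $A_x$ and $A_y$ occurred, then $v(f^{\ast})$ would have to equal both $f^{\ast}(x)$ and $f^{\ast}(y)$, which is impossible.

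From pairwise disjointness, $\sum_{x \in P} \Pr[A_x] \leq 1$, i.e., $\sum_{x \in P} t^{-|S_x|} \leq 1$. Since $|S_x| \leq d$ for every $x$, this gives $n \cdot t^{-d} \leq 1$, hence $n \leq t^d$ and $\ltdim{t}{P} \geq \log_t n$. The one thing to discover is the right encoding: think of each element $x$ as being identified by a ``partial codeword'' $f \mapsto f(x)$ defined on $S_x$. Once this framing is in place, the realiser property immediately forces two different elements' codewords to be incompatible with any common extension $v$, and the desired bound falls out of the resulting Kraft-type inequality.
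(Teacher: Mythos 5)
Your proof is correct and is essentially the paper's own argument (the Hansel-type random selection): your random assignment $v(f)\in[t]$ is exactly the paper's uniformly random choice of one colour class per partial function, and the pairwise disjointness of your events $A_x$ is the paper's observation that the resulting intersection set is independent in an edge cover of $K_n$ and hence has at most one element. The only difference is cosmetic: the paper adds a convexity step to bound the \emph{average} multiplicity by $\log_t n$, which is slightly stronger than bounding the maximum as you do, but your version suffices for the stated theorem.
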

\begin{proof}
Let $P$ be a poset of cardinality $n$ and let $\mathcal{R}$ be a local $t$--realiser of $P$. For each pair of distinct elements $x,y\in P$, either $x\not\geq y$ or $y\not\geq x$. Either way, there is a partial function $f\in\mathcal{R}$ such that $f(x)\neq f(y)$. For each $f\in\mathcal{R}$, let $G_f$ be a graph with vertex set $\dom(f)$ and edge set $\{xy : f(x)\neq f(y)\}$. Clearly each $G_f$ is a complete $t$-partite graph, and the set $\left\{G_f : f\in\mathcal{R}\right\}$ is an edge cover of $K_n$.

 Now, for each $f\in\mathcal{R}$, let $U_f$ be one of the $t$ classes of $G_f$, chosen independently and uniformly at random, and let $U$ be the intersection of all the $U_f$'s. For each edge $xy$ of $K_n$, there is an $f\in \mathcal{R}$ such that $x$ and $y$ are in different classes of $G_f$, so $x$ and $y$ cannot both be in $U_f$. Therefore $U$ is an independent set and so has at most one element. Now, for each $x\in P$, the probability that $x\in U$ is $t^{-\mu_{\mathcal{R}}(x)}$, so $\Expect[|U|] = \sum_{x\in P}t^{-\mu_{\mathcal{R}}(x)} \leq 1$. Now let $\mu = \frac{1}{n}\sum_{x\in P}\mu_\mathcal{R}(x)$ (i.e., the average multiplicity of $\mathcal{R}$). By convexity, $nt^{-\mu} \leq \sum_{x\in P}t^{-\mu_{\mathcal{R}}(x)} \leq 1$, and hence $\mu\geq \log_t n$.

 Note that the case $t=2$ was proved by Hansel~\cite{hansel}; see also Bollob\'{a}s and Scott~\cite{bs}.
\end{proof}

 This bound is clearly sharp, as $\mathbf{t}^n$ has cardinality $t^n$ and local $t$--dimension at most (and hence exactly) $n$. Hiraguchi~\cite{hiraguchi} proved that a poset of dimension $n\geq 3$ has cardinality at least $2n-1$, which implies inequality~\ref{ineq:logbound}.

 The next proposition shows that chains also have the smallest local $t$--dimension possible given their cardinality. This contrasts with $t$--dimension; it's a simple exercise to prove that $\tdim{t}{\mathbf{n}} = \left\lceil\frac{n-1}{t-1}\right\rceil$.

\begin{proposition}\label{prop:ltdimchain}
For all $n\in\N$, $\ltdim{t}{\mathbf{n}} = \lceil \log_t n \rceil$.
\end{proposition}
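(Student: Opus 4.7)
The lower bound is immediate from Theorem~\ref{thm:lowerbound}: it yields $\ltdim{t}{\mathbf{n}} \geq \log_t n$, and integrality of the local $t$-dimension forces $\ltdim{t}{\mathbf{n}} \geq \lceil \log_t n \rceil$.

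For the upper bound, the plan is to induct on $n$ using a recursive $t$-ary block decomposition of $\mathbf{n}$. In the base case $n \leq t$, the identity embedding $\mathbf{n} \hookrightarrow \mathbf{t}$ is a $t$-realiser of multiplicity $1 = \lceil \log_t n \rceil$, with the empty realiser handling the trivial case $n = 1$.

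For $n > t$, I would partition $\mathbf{n} = \{0, \ldots, n-1\}$ into at most $t$ consecutive blocks $B_0, \ldots, B_{t-1}$, each of cardinality at most $\lceil n/t \rceil$, and let $\pi : \mathbf{n} \to \mathbf{t}$ be the total monotone function sending each element to the index of its block. By the inductive hypothesis, each $B_k$ admits a local $t$-realiser $\mathcal{R}_k$ of maximum multiplicity at most $\lceil \log_t \lceil n/t \rceil \rceil$, and each $f \in \mathcal{R}_k$ is naturally a monotone partial function on $\mathbf{n}$ with domain $B_k$. Then $\{\pi\} \cup \bigcup_k \mathcal{R}_k$ is a local $t$-realiser of $\mathbf{n}$: any pair $i < j$ in different blocks is distinguished by $\pi$, and any pair in the same block $B_k$ is distinguished by some $f \in \mathcal{R}_k$. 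Each $x \in \mathbf{n}$ has multiplicity exactly $1 + \mu_{\mathcal{R}_k}(x) \leq 1 + \lceil \log_t \lceil n/t \rceil \rceil$ in this collection, where $k$ is the index of the block containing $x$.

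Closing the induction therefore reduces to the arithmetic identity $\lceil \log_t \lceil n/t \rceil \rceil = \lceil \log_t n \rceil - 1$ for $n > 1$, which follows from the equivalence $t^{d-1} < n \leq t^d \iff t^{d-2} < \lceil n/t \rceil \leq t^{d-1}$. There is no serious obstacle here --- the argument is essentially a $t$-ary divide-and-conquer, and the only point worth flagging is verifying that ceiling identity.
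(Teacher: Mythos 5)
Your proof is correct and is essentially the paper's argument: the paper likewise splits the chain into $t$ consecutive segments, covers each by a recursive copy of a realiser, and adds one total function to $\mathbf{t}$ distinguishing the segments (phrased bottom-up as $\ltdim{t}{\mathbf{tn}} \leq \ltdim{t}{\mathbf{n}}+1$, with the non-power-of-$t$ cases absorbed by monotonicity rather than by your ceiling identity), and the lower bound is the same appeal to Theorem~\ref{thm:lowerbound}.
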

\begin{proof}
Obviously, $\ltdim{t}{\mathbf{1}} = 0$. Now let $\mathcal{R}$ be a local $t$--realiser of $\mathbf{n}$. We construct a local $t$--realiser of $\mathbf{tn}$ splitting $\mathbf{tn}$ into $t$ equal segments and taking a copy of $\mathcal{R}$ covering each segment, as well as a total function from $\mathbf{tn}$ to $\mathbf{t}$ that sends the $i^\textrm{th}$ segment to $i$, for each $i\in \mathbf{t}$. This shows that $\ltdim{t}{\mathbf{tn}} \leq \ltdim{t}{\mathbf{n}}+1$, and hence by induction $\ltdim{t}{\mathbf{n}} \leq \lceil\log_t n\rceil$ for all $n$. The matching lower bound follows from Theorem~\ref{thm:lowerbound}.
\end{proof}

\begin{corollary}\label{cor:ldimbound}
For every poset $P$ with cardinality $n$ and every integer $t\geq 2$,
\[\ltdim{t}{P} \leq \lceil\log_t n\rceil\ldim{P}.\]
For every poset $P$ and every pair of integers $t\geq s \geq 2$,
\[\ltdim{t}{P} \leq \lceil\log_t s\rceil\ltdim{s}{P}.\]
\hfill\qedsymbol
\end{corollary}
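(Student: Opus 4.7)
The plan is to prove both inequalities by the same ``compose with a chain realiser'' trick, so I describe the proof of the first inequality and note that the second is identical.

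First I would fix a local realiser $\mathcal{L}$ of $P$ with $\max_{x\in P}\mu_{\mathcal{L}}(x) = \ldim{P}$. For each $f\in\mathcal{L}$, since $f$ is a monotone partial function from $P$ into a chain, we may replace its codomain by $f(\dom(f))$, which is itself a chain $C_f$ of size at most $|\dom(f)| \leq n$. By Proposition~\ref{prop:ltdimchain}, each such chain $C_f$ admits a local $t$--realiser $\mathcal{R}_f$ with maximum multiplicity at most $\lceil\log_t |C_f|\rceil \leq \lceil\log_t n\rceil$.

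Next, I would form $\mathcal{R} = \{h\circ f : f\in\mathcal{L},\ h\in\mathcal{R}_f\}$. Each $h\circ f$ is a composition of monotone partial functions with codomain $\mathbf{t}$, hence a monotone partial function from $P$ to $\mathbf{t}$. To verify that $\mathcal{R}$ is a local $t$--realiser, take any $x,y\in P$ with $x\not\geq y$; since $\mathcal{L}$ is a local realiser there is some $f\in\mathcal{L}$ with $x,y\in\dom(f)$ and $f(x) < f(y)$, and since $\mathcal{R}_f$ is a local $t$--realiser of $C_f$ there is some $h\in\mathcal{R}_f$ with $h(f(x)) < h(f(y))$; then $h\circ f\in\mathcal{R}$ certifies the pair. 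For the multiplicity, each $x\in P$ lies in $\dom(h\circ f)$ only when $x\in\dom(f)$ and $f(x)\in\dom(h)$, so
\[
\mu_{\mathcal{R}}(x) = \sum_{f\in\mathcal{L},\ x\in\dom(f)} \mu_{\mathcal{R}_f}(f(x)) \leq \mu_{\mathcal{L}}(x)\cdot \lceil\log_t n\rceil \leq \ldim{P}\cdot\lceil\log_t n\rceil,
\]
which yields the first inequality.

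The second inequality is proved by exactly the same construction, starting from a local $s$--realiser $\mathcal{R}$ of $P$ of multiplicity $\ltdim{s}{P}$ in place of $\mathcal{L}$; now every image chain $C_f$ is contained in $\mathbf{s}$, so the chain local $t$--realisers contribute multiplicity at most $\lceil\log_t s\rceil$ rather than $\lceil\log_t n\rceil$. I do not expect any real obstacle: the only points requiring care are that compositions of monotone partial functions are monotone, and that multiplicities add correctly under the composition (which is immediate from the formula above).
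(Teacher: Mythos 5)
Your proof is correct and is exactly the argument the paper intends: the corollary is stated without proof as an immediate consequence of Proposition~\ref{prop:ltdimchain}, obtained by composing each partial function of an optimal local ($s$--)realiser with a local $t$--realiser of its image chain and summing multiplicities. (Incidentally, your construction does not use the hypothesis $t\geq s$ at all, so it yields the second inequality for every pair $s,t\geq 2$.)
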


 For antichains, a similar argument shows that $\ltdim{t}{A_n} \leq 2\lceil\log_t n\rceil$. In the case $t=2$, we can do better. It follows from Sperner's theorem that
\begin{equation}
\ltwodim{A_n} \leq \twodim{A_n} = \min\left\{m : \binom{m}{\lfloor m/2\rfloor} \geq n\right\} .
\end{equation}
The corresponding upper bound follows from a theorem of Bollob\'{a}s and Scott~\cite{bs}.

\begin{proposition}
For all $n\in\N$,
\[\ltwodim{A_n} \geq \min\left\{m : \binom{m+1}{\lfloor (m+1)/2\rfloor} \geq n+1\right\}.\]
\end{proposition}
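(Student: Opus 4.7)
The plan is to encode a local $2$-realiser of $A_n$ as a Bollob\'{a}s set-pair system and apply the classical set-pair inequality (which I take to be what the Bollob\'{a}s--Scott citation refers to). Given a local $2$-realiser $\mathcal{R}$ of $A_n$ witnessing $\mu = \ltwodim{A_n}$, for each $x \in A_n$ I form the disjoint pair $A_x = \{f \in \mathcal{R} : f(x) = 1\}$ and $B_x = \{f \in \mathcal{R} : f(x) = 0\}$; then $|A_x|+|B_x| = \mu_{\mathcal{R}}(x) \leq \mu$.

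The first substantive step is to verify the Bollob\'{a}s condition. For distinct $x, y \in A_n$, the antichain condition $y \not\geq x$ combined with the definition of a local $2$-realiser yields an $f \in \mathcal{R}$ with $x, y \in \dom(f)$ and $f(y) < f(x)$, which in the codomain $\mathbf{2}$ forces $f \in A_x \cap B_y$; by symmetry $B_x \cap A_y \neq \emptyset$ as well. Bollob\'{a}s's two-family inequality then gives
\[ \sum_{x \in A_n} \binom{|A_x|+|B_x|}{|A_x|}^{-1} \leq 1, \]
and bounding each summand below by $1/\binom{\mu}{\lfloor \mu/2 \rfloor}$ produces the Bollob\'{a}s bound $n \leq \binom{\mu}{\lfloor \mu/2 \rfloor}$.

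To recover the statement of the proposition I just need the elementary inequality $\binom{\mu}{\lfloor \mu/2 \rfloor} + 1 \leq \binom{\mu+1}{\lfloor (\mu+1)/2 \rfloor}$ for $\mu \geq 2$, which is a short Pascal's-rule computation split into the parity cases of $\mu$ (it is actually an equality at $\mu = 2$ and strict afterwards). Combined with the previous bound this yields $n+1 \leq \binom{\mu+1}{\lfloor (\mu+1)/2 \rfloor}$, which is the rearranged form of the desired lower bound on $\ltwodim{A_n}$.

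The only real obstacle is the conceptual identification in the first paragraph---once one recognises that a local $2$-realiser of an antichain is literally a Bollob\'{a}s set-pair family, everything else is a black-box application of a well-known inequality together with a one-line binomial comparison.
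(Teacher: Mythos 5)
Your proof is correct, and it takes a recognisably different route from the paper's. The paper observes that a local $2$--realiser of $A_n$ is exactly a strongly separating system on $[n]$ and then cites a theorem of Bollob\'as and Scott stating that the \emph{total} domain size of such a system is at least $kn$ (with $k$ the stated threshold), from which an element of multiplicity at least $k$ follows by averaging. You instead package the realiser as a Bollob\'as set-pair system $(A_x,B_x)_{x\in A_n}$ --- the cross-intersection condition $A_x\cap B_y\neq\emptyset$ is precisely the separation condition for the ordered pair $(y,x)$ --- and invoke the classical inequality $\sum_x \binom{|A_x|+|B_x|}{|A_x|}^{-1}\leq 1$, which bounds the maximum multiplicity $\mu$ directly via $n\leq\binom{\mu}{\lfloor\mu/2\rfloor}$; your Pascal's-rule step $\binom{\mu}{\lfloor\mu/2\rfloor}+1\leq\binom{\mu+1}{\lfloor(\mu+1)/2\rfloor}$ (which in fact already holds for all $\mu\geq 1$) then recovers exactly the stated threshold. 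What your version buys is self-containedness modulo a far more standard black box; what the paper's cited result buys is the stronger conclusion that the \emph{average} multiplicity, not just the maximum, is at least $k$. Both yield the same numerical bound. (Both arguments, and indeed the proposition itself, implicitly assume $n\geq 2$: for $n=1$ the empty realiser gives $\ltwodim{A_1}=0$ while the right-hand side equals $1$, and your Pascal step degenerates at $\mu=0$; for $n\geq 2$ one always has $\mu\geq 2$, so your restriction is harmless.)
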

\begin{proof}
A local $2$--realiser of $A_n$ is a set $\mathcal{R}$ of partial functions from $[n]$ to $\{0,1\}$ such that, for every ordered pair $(x,y)\in [n]^2$ with $x\neq y$, there is an $f\in\mathcal{R}$ such that $f(x) = 0$ and $f(y) = 1$. Such a set is also known as a \emph{strongly separating system} on $[n]$. Bollob\'{a}s and Scott~\cite{bs} proved that, for every strongly separating system $\mathcal{R}$ on $[n]$, the sum of the cardinalities of the domains of the functions in $\mathcal{R}$ is at least $kn$, where $k$ is the smallest integer such that
\begin{equation}
\binom{k+1}{\lfloor (k+1)/2\rfloor} \geq n+1.
\end{equation}
It follows that there exists an element of $A_n$ whose multiplicity in $\mathcal{R}$ is at least $k$.
\end{proof}

 Using Stirling's inequality to estimate the upper and lower bounds, it follows that $\ltwodim{A_n} = \log n + \frac{1}{2}\log\log n + O(1)$. Using more precise estimates, one can show that the $O(1)$ term is at most $2$.

\section{Local $2$--dimension and complete bipartite edge-coverings of graphs}
 Let $P$ be a two-level poset with minimal elements $A$ and maximal elements $B$, with $A$ and $B$ disjoint. The bipartite imcomparability graph of $P$ is the graph with vertex set $A\cup B$ and edge set $\{ab : a\in A, b\in B, a \not< b\}$.

 In \cite{localdim}, Kim, Martin, Masa{\v{r}}{\'\i}k, Shull, Smith, Uzzell, and Wang showed that the local dimension of a two-level poset $P$ is essentially the same (up to an additive constant) as a certain graph parameter, called the \emph{local difference graph covering number}, of the bipartite incomparability graph of $P$. In this section, we will show that local $2$-dimension has a similar connection with another graph parameter, namely the local complete bipartite covering number.

 Let $G$ be a graph. A complete bipartite edge-covering of $G$ is a set of complete bipartite subgraphs of $G$, the union of whose edge sets is the edge set of $G$. Given an edge cover $\mathcal{C}$ of $G$ and a vertex $v\in G$, the multiplicity of $v$ in $\mathcal{C}$, denoted $\mu_\mathcal{C}(v)$, is the number of subgraphs in $\mathcal{C}$ whose vertex sets contain $v$. The local complete bipartite covering number of $G$, denoted $\lbc{G}$, is defined as the minimum of $\max\left\{\mu_\mathcal{C}(v) : v\in V(G)\right\}$ over all complete bipartite edge-coverings $\mathcal{C}$ of $G$.

 Note that a random bipartite graph with classes of cardinality $n$ has local complete bipartite covering number $\Omega(n/\log n)$ with high probability, so in the following theorem, $\log|A|$ is typically much smaller than $\lbc{G}$.

\begin{theorem}\label{thm:bipinc}
Let $P$ be a two-level poset $P$ with minimal elements $A$ and maximal elements $B$, and let $G$ be the bipartite incomparability graph of $P$. Assume without loss of generality that $|A| \geq |B|$. Then
\[\lbc{G} \leq \ltwodim{P} \leq \lbc{G} + \log |A| + \tfrac{1}{2}\log\log |A| + 3.\]
\end{theorem}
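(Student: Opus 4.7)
The plan is to prove each inequality separately. For the lower bound $\lbc{G} \leq \ltwodim{P}$, I convert an optimal local $2$--realiser of $P$ into a complete bipartite edge-covering of $G$ whose vertex multiplicities are bounded by those of the realiser. For the upper bound, I build a local $2$--realiser of $P$ out of three ingredients: the covering $\mathcal{C}$ itself, which handles one direction of witness for incomparable pairs across $A \times B$; a single monotone total function that handles the other direction together with all comparable pairs in $A \times B$; and strongly separating systems supported on the antichains $A$ and $B$, which handle ordered pairs inside each side.

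For the lower bound, let $\mathcal{R}$ be an optimal local $2$--realiser of $P$. For each $f \in \mathcal{R}$ set $A_f^1 = \{a \in A \cap \dom(f) : f(a) = 1\}$ and $B_f^0 = \{b \in B \cap \dom(f) : f(b) = 0\}$, and let $H_f$ be the complete bipartite graph on $(A_f^1, B_f^0)$. Monotonicity of $f$ forces every such pair $(a,b)$ to be incomparable in $P$, since $a < b$ would give $f(a) \leq f(b)$, contradicting $1 > 0$; hence $H_f$ is a subgraph of $G$. Conversely, for any edge $ab \in E(G)$ the vertices $a$ and $b$ are incomparable, so $b \not\geq a$ and some $g \in \mathcal{R}$ satisfies $g(b) < g(a)$, i.e.\ $a \in A_g^1$ and $b \in B_g^0$, placing $ab \in E(H_g)$. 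Thus $\{H_f : f \in \mathcal{R}\}$ is a complete bipartite edge-covering of $G$, and since $v \in V(H_f)$ requires $v \in \dom(f)$, each vertex's multiplicity in this covering is at most its multiplicity in $\mathcal{R}$.

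For the upper bound, let $\mathcal{C}$ be an optimal complete bipartite edge-covering of $G$. Each $H \in \mathcal{C}$ decomposes as $V(H) = X_H \cup Y_H$ with $X_H \subseteq A$ and $Y_H \subseteq B$; define a partial function $f_H$ on $X_H \cup Y_H$ by $f_H|_{X_H} = 1$ and $f_H|_{Y_H} = 0$. This is monotone on $P$ because $H \subseteq G$ prevents any $a \in X_H$ from lying below any $b \in Y_H$. To the family $\{f_H\}$ I adjoin: a monotone total function $\phi$ on $P$ with $\phi|_A = 0$ and $\phi|_B = 1$, witnessing $a \not\geq b$ for every $(a,b) \in A \times B$; and strongly separating systems on $A$ and on $B$, regarded as partial functions on $P$ with domains inside $A$ or inside $B$ respectively, which are automatically monotone on $P$ because $A$ and $B$ are antichains of $P$. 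By the sharpened upper bound for $\ltwodim{A_n}$ noted in the remark preceding this section, each such system may be chosen with maximum multiplicity at most $\log|A| + \tfrac{1}{2}\log\log|A| + 2$. The union of the three families is a local $2$--realiser of $P$: the $f_H$ witness $b \not\geq a$ for incomparable pairs, $\phi$ witnesses $a \not\geq b$ for every cross pair, and the separating systems witness ordered pairs inside $A$ and inside $B$. Each vertex has total multiplicity at most $\lbc{G} + 1 + \log|A| + \tfrac{1}{2}\log\log|A| + 2$, giving the claimed bound.

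The main subtlety in both directions is preserving monotonicity of the constructed partial functions. In the lower bound this dictates the choice of $A_f^1 \times B_f^0$ rather than $A_f^0 \times B_f^1$, because only the former is forced to contain no comparable pair $a < b$ and hence to sit inside $G$. In the upper bound this dictates the labelling convention (the $A$-side of each $H$ receives $1$, the $B$-side receives $0$) and the requirement that the separating systems on $A$ and $B$ have their domains disjoint from $B$ and $A$ respectively; otherwise a single comparability $a < b$ could produce the forbidden labelling $(f(a), f(b)) = (1, 0)$.
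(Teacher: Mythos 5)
Your proposal is correct and follows essentially the same route as the paper: converting a local $2$--realiser into a covering via the classes $f^{-1}(1)\cap A$ and $f^{-1}(0)\cap B$ for the first inequality, and assembling the covering's partial functions, one total function separating $A$ from $B$, and optimal strongly separating systems (local $2$--realisers of the antichains $A$ and $B$) for the second. Your explicit monotonicity checks and the observation that the covering multiplicity is only bounded above by (rather than equal to) the realiser multiplicity are slightly more careful than the paper's write-up, but the construction is the same.
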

\begin{proof}
First we show that $\lbc{G} \leq \ltwodim{P}$. To this end, let $\mathcal{R}$ be a local $2$--realiser of $P$. For each partial function $f\in\mathcal{R}$, let $B_f$ be the complete bipartite graph with classes $f^{-1}(1)\cap A$ and $f^{-1}(0)\cap B$. Let $\mathcal{C} = \left\{B_f : f\in\mathcal{R}\right\}$. Now $\mathcal{C}$ is a complete bipartite edge-covering of $G$, and, for each $v\in P$, $\mu_\mathcal{C}(v) = \mu_\mathcal{R}(v)$.

 Now we show that $\ltwodim{P} \leq \lbc{G} + \log |A| + \tfrac{1}{2}\log\log |A| + 3$. Let $\mathcal{C}$ be a complete bipartite edge-covering of $G$. For each $B\in\mathcal{C}$, define a partial function $f_B$ with domain $V(B)$ by $f_B(a) = 1$ if $a\in A$ and $f(b) = 0$ if $b\in B$. Each such partial function is monotone and, for each $a\in A$, $b\in B$ with $a$ and $b$ incomparable, there is a $B\in\mathcal{C}$ such that $f_B(b) < f_B(a)$. Now define a function $f$ with domain $P$ by $f(a) = 0$ if $a\in A$, $f(b) = 1$ if $b\in B$. Finally, let $\mathcal{R}$ and $\mathcal{S}$ be local $2$--realisers of the antichains $A$ and $B$ respectively. The set $\mathcal{T} = \left\{B_f : B\in\mathcal{C}\right\}\cup\mathcal{R}\cup\mathcal{S}\cup\{f\}$ is a local $2$--realiser of $P$. For each $a\in A$, $\mu_\mathcal{T}(a) = \mu_\mathcal{C}(a) + \mu_\mathcal{R}(a) + 1$ and, for each $b\in B$, $\mu_\mathcal{T}(b) = \mu_\mathcal{C}(b) + \mu_\mathcal{S}(b) + 1$. As we saw in the previous section, $\mathcal{R}$ and $\mathcal{S}$ can be chosen so that each element has multiplicity at most
\begin{equation}
\min\left\{m : \tbinom{m}{\lfloor m/2\rfloor} \geq |A|\right\} \leq \log |A| + \tfrac{1}{2}\log\log |A| + 2.
\end{equation}
\end{proof}

\begin{corollary}
Let $S_n$ be the standard example of a poset of dimension $n$, namely the suborder of the $n$--dimensional Boolean lattice consisting of all subsets of $[n]$ of cardinality $1$ and all subsets of cardinality $n-1$. For all $n\geq 2$,
\[\ltwodim{S_n} \leq \log n + \tfrac{1}{2}\log\log n + 4.\]
\end{corollary}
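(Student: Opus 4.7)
The plan is to apply Theorem~\ref{thm:bipinc} to $S_n$ and then just compute $\lbc{G}$ for the bipartite incomparability graph of $S_n$. I will take the minimal elements of $S_n$ to be $A = \{\{i\} : i\in[n]\}$ and the maximal elements to be $B = \{[n]\setminus\{i\} : i\in[n]\}$, so $|A| = |B| = n$. For any $i,j\in[n]$, the relation $\{i\} \leq [n]\setminus\{j\}$ holds precisely when $i\neq j$. Hence the bipartite incomparability graph $G$ has an edge between $\{i\}$ and $[n]\setminus\{j\}$ if and only if $i = j$, so $G$ is a perfect matching on $2n$ vertices.

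The next step is to observe that $\lbc{G} = 1$. Indeed, in a perfect matching no two vertices on the same side share a common neighbour, so any complete bipartite subgraph $K_{a,b}\subseteq G$ must satisfy $a = b = 1$, i.e., it is either a single edge or a single isolated vertex. The covering $\mathcal{C}$ consisting of the $n$ matching edges covers every edge of $G$, and every vertex of $G$ lies in exactly one member of $\mathcal{C}$. Hence $\lbc{G} \leq 1$, and the matching equality is trivial.

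Plugging into the upper bound from Theorem~\ref{thm:bipinc} now finishes the proof:
\[
\ltwodim{S_n} \leq \lbc{G} + \log|A| + \tfrac{1}{2}\log\log|A| + 3 = 1 + \log n + \tfrac{1}{2}\log\log n + 3,
\]
which is exactly the claimed bound. There is essentially no obstacle here; the only thing to check carefully is the identification of complete bipartite subgraphs of a matching, which I sketched above. One could also give a direct explicit construction of a local $2$-realiser by combining the identity-style function separating $A$ from $B$, the $n$ partial functions $f_i$ with domain $\{\{i\},[n]\setminus\{i\}\}$ handling the matching edges, and the efficient local $2$-realisers of the two antichains $A$ and $B$ obtained in the previous section, but this just unwinds the proof of Theorem~\ref{thm:bipinc} and so I would not bother.
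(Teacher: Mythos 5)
Your proof is correct and follows exactly the route the paper intends: identify the bipartite incomparability graph of $S_n$ as a perfect matching, note $\lbc{G}=1$, and plug into the upper bound of Theorem~\ref{thm:bipinc} with $|A|=n$. The paper states this in one line; you have merely supplied the (correct) details.
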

\begin{proof}
This follows from Theorem~\ref{thm:bipinc} and the fact that the bipartite incomparability graph of $S_n$ is a matching.
\end{proof}

 The split of a poset $P$, first defined by Kimble (see \cite{trotter}), is defined as the two-level poset $Q$ with minimal elements $P' = \{x' : x\in P\}$ and maximal elements $P'' = \{x'' : x\in P\}$, where $x' \leq y'$ if and only if $x\leq y$ in $P$.

 The following lemma is analogous to a lemma proved for local dimension by Barrera-Cruz, Prag, Smith, Taylor, and Trotter in \cite{bpstt}.

\begin{lemma}\label{lem:split}
Let $P$ be a poset with $n$ elements and let $Q$ be the split of $P$. Then
\[\ltwodim{Q} - \log n - \tfrac{1}{2}\log\log n - 3 \leq \ltwodim{P} \leq 2\ltwodim{Q} - 2.\]
\end{lemma}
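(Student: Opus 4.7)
I will prove the two inequalities separately.

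For the lower bound $\ltwodim{Q} \leq \ltwodim{P} + \log n + \tfrac{1}{2}\log\log n + 3$, the plan is to apply Theorem~\ref{thm:bipinc} to the two-level poset $Q$. Both halves of the split have cardinality $n$, so it suffices to show that the bipartite incomparability graph $G$ of $Q$ satisfies $\lbc{G} \leq \ltwodim{P}$. Given a local $2$--realiser $\mathcal{R}$ of $P$, I assign to each $f \in \mathcal{R}$ the complete bipartite graph $B_f$ with classes $\{a' : a \in f^{-1}(1)\}$ and $\{b'' : b \in f^{-1}(0)\}$. Monotonicity of $f$ on $P$ guarantees that every pair $(a', b'')$ drawn from these classes is indeed an edge of $G$ (since $a \leq b$ would force $f(a) \leq f(b)$), and the local-realiser property of $\mathcal{R}$ guarantees that every edge of $G$ is covered. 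Since $a'$ appears in $B_f$ only when $f(a) = 1$ (and $b''$ only when $f(b) = 0$), every vertex's multiplicity in $\{B_f : f \in \mathcal{R}\}$ is bounded by its corresponding $\mu_\mathcal{R}$ value, giving $\lbc{G} \leq \ltwodim{P}$.

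For the upper bound $\ltwodim{P} \leq 2\ltwodim{Q} - 2$, start from a local $2$--realiser $\mathcal{R}$ of $Q$ with maximum multiplicity $d$. For each $f \in \mathcal{R}$, define a partial function $h_f$ on $P$ by $h_f(x) = 1$ when $x' \in f^{-1}(1)$ and $h_f(x) = 0$ when $x'' \in f^{-1}(0)$. The two conditions cannot both occur, since $x' \leq x''$ in $Q$ would force $f(x') \leq f(x'')$; the same principle applied to pairs $x \leq y$ in $P$ (using $x' \leq y''$ in $Q$) shows each $h_f$ is monotone on $P$. The collection $\{h_f : f \in \mathcal{R}\}$ is a local $2$--realiser of $P$: whenever $y \not\leq x$ in $P$ we have $y' \not\leq x''$ in $Q$, so some $f$ satisfies $f(y') = 1$ and $f(x'') = 0$, which translates to $h_f(y) = 1 > 0 = h_f(x)$.

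The main obstacle is extracting the $-2$ in the multiplicity count. A priori, the multiplicity of $x$ in the new realiser equals $\beta_x + \gamma_x$, where $\beta_x = |\{f \in \mathcal{R} : f(x') = 1\}|$ and $\gamma_x = |\{f \in \mathcal{R} : f(x'') = 0\}|$, and the naive bound $\mu_\mathcal{R}(x') + \mu_\mathcal{R}(x'') \leq 2d$ only yields $2d$. To save $2$, I will show that when $n \geq 2$, some $f \in \mathcal{R}$ satisfies $f(x') = 0$ and some $g \in \mathcal{R}$ satisfies $g(x'') = 1$. For the first, pick any $y \neq x$ in $P$; then $y'$ and $x'$ are distinct elements of the minimal level of $Q$ and so are incomparable, so some $f$ must separate them with $f(y') = 1, f(x') = 0$. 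The second claim is symmetric, using $x'' \not\leq y''$. These two witnesses are counted in $\mu_\mathcal{R}(x') + \mu_\mathcal{R}(x'')$ but not in $\beta_x + \gamma_x$, yielding the sharper bound of $2d - 2$. The degenerate case $n = 1$ is handled separately, since then $\ltwodim{P} = 0$ and $\ltwodim{Q} = 1$, and the inequality becomes $0 \leq 0$.
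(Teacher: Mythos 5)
Your proof is correct and matches the paper's argument in substance: the upper bound on $\ltwodim{Q}$ is obtained there by directly building the realiser $\{f'\}\cup\mathcal{S}\cup\mathcal{T}\cup\{g\}$, which is exactly what your invocation of Theorem~\ref{thm:bipinc} together with the observation $\lbc{G}\leq\ltwodim{P}$ unpacks to, and your second half uses the identical $f\mapsto f'$ construction. The only (harmless) deviation is in extracting the $-2$: the paper uses the single relation $x'\not\geq x''$ to find one $g$ with $g(x')=0$ and $g(x'')=1$, which also covers the case $n=1$ that you treat separately.
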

\begin{proof}
Let $\mathcal{R}$ be a local $2$--realiser of $P$. For each partial function $f\in\mathcal{R}$, define a partial function $f'$ with domain $\{x' : f(x) = 1\} \cup \{x'' : f(x) = 0\}$, sending each $x'$ and each $x''$ to $f(x)$. Let $\mathcal{S}$ and $\mathcal{T}$ be local $2$--realisers of the antichains $P'$ and $P''$, and let $g$ be the total function that maps $P'$ to $0$ and $P''$ to $1$. The union $\left\{f' : f\in\mathcal{R}\right\} \cup \mathcal{S} \cup \mathcal{T} \cup \{g\}$ is a local $2$--realiser of $Q$, and $\mathcal{R}$, $\mathcal{S}$, and $\mathcal{T}$ can be chosen so that each element of $Q$ has multiplicity at most $\ltwodim{P} + \ltwodim{A_n} + 1$. Therefore, $\ltwodim{Q} \leq \ltwodim{P} + \ltwodim{A_n} + 1$.

 Now let $\mathcal{R}$ be a local $2$--realiser of $Q$.
For each $f\in\mathcal{R}$, we define a partial function $f'$ with domain $\left\{x \in P : f(x') = 1 \textrm{ or } f(x'') = 0\right\}$, mapping $x$ to $1$ if $f(x') = 1$ and $0$ if $f(x'') = 0$. Because each $f$ is monotone and $x' < x''$ for all $x\in P$, only one of these cases can be true for each $x\in\dom(f')$.
It is easy to check that $f'$ is monotone (there are four cases to consider). For each $x$ and $y$ in $P$ with $x \not\geq y$, $x'' \not\geq y'$, so there is an $f\in\mathcal{R}$ such that $f(x'') = 0$ and $f(y') = 1$, and hence $f'(x) < f'(y)$. Therefore $\mathcal{S} = \left\{f' : f\in\mathcal{R}\right\}$ is a local $2$--realiser of $P$. For each $x\in P$, there is a $g\in\mathcal{R}$ such that $g(x') = 0$ and $f(x'') = 1$, so $x\not\in\dom(g')$. It follows that $\mu_\mathcal{S}(x) \leq \left(\mu_\mathcal{R}(x') - 1\right) + \left(\mu_\mathcal{R}(x'') - 1\right)$, and hence $\ltwodim{P} \leq 2\ltwodim{Q} - 2$.
\end{proof}

For any $n\in\N$, let $H_n$ be the bipartite incomparability graph of the split of $\mathbf{n}$.
A \emph{difference graph with $n$ steps} is a graph that can be obtained from $H_n$ by a sequence of vertex duplications.
For other definitions and characterisations of difference graphs, see \cite{hammerpeledsun}.
The \emph{local difference graph covering number} of a graph $G$, denoted $\ldc{G}$, is defined in the same way as $\lbc{G}$, substituting "complete bipartite graph" with "difference graph." Since a complete bipartite graph is just a difference graph with one step, $\ldc{G} \leq \lbc{G}$ for every graph $G$. Let $P$ be a two-layer poset and $G$ its bipartite incomparability graph. Kim et al. proved the analogue of Theorem~\ref{thm:bipinc} for local dimension, which states that $\ldc{G} \leq \ldim{P} \leq \ldc{G} + 2$.

Dam\'{a}sdi, Felsner,  Gir\~{a}o, Keszegh, Lewis, Nagy, and Ueckerdt~\cite{diffgraph} proved that a difference graph with $n$ steps has local complete bipartite covering number equal to
\begin{equation}
\min\left\{k : \tbinom{2k}{k} \geq n+1\right\} = \tfrac{1}{2}\log n + \tfrac{1}{4}\log\log n + O(1).
\end{equation}
This implies a version of Corollary~\ref{cor:ldimbound} for graphs, namely, for every graph $G$ on $n$ vertices,
\begin{equation}
\lbc{G} \leq \lbc{H_{\lceil n/2\rceil}}\ldc{G} \leq \left(\tfrac{1}{2}\log n + \tfrac{1}{4}\log\log n + \tfrac{3}{2}\right)\ldc{G}.
\end{equation}

 The Erd\H{o}s-Pyber theorem~\cite{ep} states that, for every graph $G$ with $n$ vertices, $\lbc{G} = O\left(\frac{n}{\log n}\right)$. Csirmaz, Ligeti, and Tardos~\cite{clt} showed that $\lbc{G} \leq (1+o(1))\frac{n}{\log n}$. We can use this to bound the local $2$--dimension of any poset from above.

\begin{theorem}
For every poset $P$ with cardinality $n$,
\[\ltwodim{P} \leq (4+o(1))\frac{n}{\log n}.\]
\end{theorem}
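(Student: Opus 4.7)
The plan is to reduce to the two-level case via the split construction and then invoke Theorem~\ref{thm:bipinc} together with the Csirmaz–Ligeti–Tardos bound on $\lbc{G}$.

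Concretely, let $P$ be a poset with $n$ elements, let $Q$ be the split of $P$, and let $G$ be the bipartite incomparability graph of $Q$. Then $Q$ is a two-level poset whose minimal and maximal classes each have cardinality $n$, and $G$ is a graph on $2n$ vertices. The Csirmaz–Ligeti–Tardos strengthening of the Erd\H{o}s–Pyber theorem, applied to $G$, yields
\[
\lbc{G} \leq (1+o(1))\,\frac{2n}{\log(2n)} = (2+o(1))\,\frac{n}{\log n}.
\]

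Next I would feed this into Theorem~\ref{thm:bipinc}, whose two-level hypothesis is satisfied by $Q$. Since $|A| = n$, the additive error $\log |A| + \tfrac{1}{2}\log\log|A| + 3$ is of order $\log n$, which is absorbed into the $o(1)$ factor. This gives
\[
\ltwodim{Q} \leq \lbc{G} + \log n + \tfrac{1}{2}\log\log n + 3 \leq (2+o(1))\,\frac{n}{\log n}.
\]
Finally, the right inequality of Lemma~\ref{lem:split} bounds $\ltwodim{P}$ in terms of $\ltwodim{Q}$:
\[
\ltwodim{P} \leq 2\ltwodim{Q} - 2 \leq (4+o(1))\,\frac{n}{\log n},
\]
which is the desired inequality.

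There is no genuine obstacle here; the work is really in the three ingredients already established (Theorem~\ref{thm:bipinc}, Lemma~\ref{lem:split}, and the Csirmaz–Ligeti–Tardos bound), and the proof is just a matter of chaining them in the right order. The only small subtlety worth flagging is the loss of a factor of $2$ coming from Lemma~\ref{lem:split}, which is exactly what accounts for the coefficient $4$ rather than $2$; one should also confirm that the additive $\log n$-type terms from Theorem~\ref{thm:bipinc} and Lemma~\ref{lem:split} (as well as the denominator $\log(2n)$ versus $\log n$) are safely hidden inside the $o(1)$.
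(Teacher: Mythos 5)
Your proof is correct and follows essentially the same route as the paper: split $P$, apply Theorem~\ref{thm:bipinc} to the bipartite incomparability graph of the split together with the Csirmaz--Ligeti--Tardos bound $\lbc{G} \leq (1+o(1))\frac{2n}{\log(2n)}$, and then use Lemma~\ref{lem:split} to return to $P$, with the additive $O(\log n)$ terms absorbed into the $o(1)$. Nothing further is needed.
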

\begin{proof}
Let $Q$ be the split of $P$ and let $G$ be the bipartite incomparability graph of $Q$. By Lemma~\ref{lem:split} and Theorem~\ref{thm:bipinc}, $\ltwodim{P} \leq 2\ltwodim{Q} \leq 2\lbc{G} + O(\log n)$. Since $|G| = 2n$, $\lbc{G} \leq (2+o(1))\frac{n}{\log n}$.
\end{proof}

 Kim et al.~\cite{localdim} proved that, as $n\to\infty$, there exist $n$-element posets with local dimension $\Omega\left(\frac{n}{\log n}\right)$. Of course the same is true for local $t$--dimension for every $t$. The author~\cite{lewis} improved this lower bound by a constant factor, showing that there exists an $n$-element poset with local dimension (and hence local $t$--dimension for every $t$) at least $\frac{n}{4\log 3n}$ for all $n\geq 2$.

 By a theorem of Kierstead~\cite{kierstead}, for all integers $\ell$, $k$, and $n$ with $1\leq \ell \leq k \leq n$, $\ltwodim{\cube{n}{\ell,k}} \leq \twodim{\cube{n}{\ell,k}} \leq \twodim{\cube{n}{1,k}} \leq \lceil e(k+1)^2\ln n\rceil$. By Theorem~\ref{thm:lowerbound}, this bound is the best possible up to a constant factor when $k$ is constant. Kierstead's argument can also be used to show that $\ltwodim{\cube{n}{1,2}} \leq \twodim{\cube{n}{1,2}} \leq \lceil 3\log_{27/23}n\rceil$.

\section{Fractional $t$--dimension and local $t$--dimension}
 Each of the poset parameters we have discussed can be described as the optimal solution to a certain integer program. In this section, we consider the linear programming relaxations of these programs, whose solutions are called the fractional variants of the original parameters.

 A \emph{fractional local realiser} of a poset $P$ is a function $w$ that assigns a nonnegative weight to each monotone partial function from $P$ to a chain $C$ in such a way that, for every pair $x\not\geq y$, $\sum \left\{w(f) : f(x) < f(y)\right\} \geq 1$. A fractional realiser is a fractional local realiser that assigns positive weight only to total functions, and a fractional local $t$--realiser is a fractional local realiser where the chain $C$ has $t$ elements. The fractional (local) ($t$)-dimension of a poset $P$ is the minimum over all fractional (local) ($t$)-realisers $w$ of $\max \left\{\sum_{x\in\dom(f)} w(f) : x\in P\right\}$.
Following Bir\'{o}, Hamburger, and P\'{o}r~\cite{bhp}, we denote the fractional variant of a parameter by adding a superscript $\star$ to the corresponding integer parameter.
Fractional dimension was introduced and studied by Brightwell and Scheinerman~\cite{brightwellscheinerman} and fractional local dimension by Smith and Trotter~\cite{smithtrotter}, but, as far as we know, fractional $t$--dimension and fractional local $t$--dimension have never been studied.

 Like the corresponding integer parameters, these fractional parameters are easily shown to be subadditive and monotonic. Also, Inequalities~\ref{ineq:1}~and~\ref{ineq:2} hold for the fractional variants as well.

 It is trivial to show that $\fltdim{t}{A_n} \leq \ftdim{t}{A_n} \leq \frac{2t}{t-1}$ for all $n$ and all $t$ -- just take $w$ to be the constant function $\frac{2t}{t-1}\cdot t^{-n}$ -- so fractional (local) $t$--dimension cannot be bounded below by a function of cardinality.


 We can determine the fractional $t$--dimension of a chain exactly.

\begin{theorem}\label{thm:ftdimchain}
For all integers $n \geq t \geq 2$, $\ftdim{t}{\mathbf{n}} = \frac{n-1}{t-1}$.
\end{theorem}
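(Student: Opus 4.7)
The plan is to prove both inequalities by exploiting the fact that any monotone map $f \colon \mathbf{n} \to \mathbf{t}$ has at most $t-1$ adjacent pairs $(i, i+1)$ on which it is strictly increasing. The lower bound will follow by a one-line double-counting argument, while the upper bound will come from an explicit symmetric weighting on the surjective monotone maps.

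For the lower bound, I would retain only the $n-1$ covering constraints coming from adjacent pairs $(i, i+1)$ with $i \in [n-1]$. Let $d(f)$ denote the number of $i$ with $f(i) < f(i+1)$. Summing these $n-1$ constraints and swapping the order of summation gives
\[
\sum_f w(f)\, d(f) \;\geq\; n-1.
\]
Monotonicity of $f$ together with $|\mathbf{t}| = t$ forces $d(f) \leq t-1$, so $(t-1)\sum_f w(f) \geq n-1$, and the total weight is at least $\tfrac{n-1}{t-1}$.

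For the upper bound, I would assign weight $\tfrac{1}{\binom{n-2}{t-2}}$ to each surjective monotone map $\mathbf{n} \to \mathbf{t}$. Such a map is determined by its set of $t-1$ jump positions in $[n-1]$, so there are $\binom{n-1}{t-1}$ of them and the total weight works out to
\[
\frac{\binom{n-1}{t-1}}{\binom{n-2}{t-2}} \;=\; \frac{n-1}{t-1}.
\]
For any fixed adjacent pair $(i, i+1)$, exactly $\binom{n-2}{t-2}$ of the maps have a jump at $i$, so that constraint holds with equality. For a non-adjacent pair $i < j$, the maps with $f(i) = f(j)$ are those whose jumps avoid $\{i, \ldots, j-1\}$, of which there are $\binom{n-1-(j-i)}{t-1}$; hence the covering weight equals $\tfrac{1}{\binom{n-2}{t-2}}\bigl[\binom{n-1}{t-1} - \binom{n-1-(j-i)}{t-1}\bigr]$, which by Pascal's identity $\binom{n-1}{t-1} = \binom{n-2}{t-1} + \binom{n-2}{t-2}$ is at least $1$ whenever $j - i \geq 1$.

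The argument has no serious obstacle: the lower bound is an immediate double-counting and the upper bound reduces to one application of Pascal's identity, with the adjacent pairs already being tight. The hypothesis $n \geq t$ is used only to ensure $\binom{n-2}{t-2} > 0$ so that the uniform weights are well-defined.
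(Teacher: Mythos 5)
Your proof is correct. The lower bound is exactly the paper's argument: restrict to the $n-1$ covering constraints, double count, and use the fact that a monotone map into $\mathbf{t}$ can be strictly increasing on at most $t-1$ adjacent pairs. The upper bound, however, takes a genuinely different route. The paper constructs a sparse realiser: a hand-picked family $F$ of exactly $n-1$ surjective monotone maps (those whose interior fibres have size at most $2$, subject to a convexity condition on which fibres are doubled), each given weight $\tfrac{1}{t-1}$, and then verifies via a four-case count that every covering pair is separated by exactly $t-1$ members of $F$. You instead put the uniform weight $\binom{n-2}{t-2}^{-1}$ on \emph{all} $\binom{n-1}{t-1}$ surjective monotone maps, identified with their $(t-1)$-element jump sets in $[n-1]$; the covering constraints are tight by a trivial count and the total weight is $\binom{n-1}{t-1}/\binom{n-2}{t-2} = \tfrac{n-1}{t-1}$ by a one-line ratio. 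Your verification of the non-adjacent pairs via Pascal's identity is also slightly more careful than the paper, which checks only covering pairs (the general pairs follow there by monotonicity, since any $f$ separating a covering pair between $x$ and $y$ separates $(x,y)$ — the same remark would let you skip your Pascal computation). The trade-off: the paper's realiser has minimal support and makes the extremal structure visible, while yours is shorter, symmetric, and avoids the case analysis needed to show $|F| = n-1$.
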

\begin{proof}
Let $w$ be a fractional $t$--realiser of $\mathbf{n}$. For each $x$ and $y$ in $\mathbf{n}$ such that $y$ covers $x$, $w$ must assign total weight at least $1$ to the set of monotone functions $f$ such that $f(x) < f(y)$. Conversely, each such $f$ separates at most $t-1$ covering relations. Since $\mathbf{n}$ has $n-1$ covering relations, we have
\begin{equation}
\begin{split}
\sum\limits_{\substack{f: \mathbf{n} \to \mathbf{t} \\ f\text{ monotone}}} (t-1)w(f) \ \ \geq \sum\limits_{\substack{f: \mathbf{n} \to \mathbf{t} \\ f\text{ monotone}}} \sum\limits_{\substack{x, y\in \mathbf{n} \\ y\text{ covers }x \\ f(x) < f(y)}} w(f) \ \ =
\\
\sum\limits_{\substack{x, y\in \mathbf{n} \\ y\text{ covers }}} \sum\limits_{\substack{f: \mathbf{n} \to \mathbf{t} \\ f\text{ monotone} x \\ f(x) < f(y)}} w(f) \geq n-1.
\end{split}
\end{equation}
It follows that $\ftdim{t}{\mathbf{n}} \geq \frac{n-1}{t-1}$.

 To show that $\ftdim{t}{\mathbf{n}} \leq \frac{n-1}{t-1}$ when $n > t$ (the case $n=t$ is trivial), we will define a set $F$ of montone functions from $\mathbf{n}$ to $\mathbf{t}$ such that $|F| = n-1$ and, for every pair $x,y\in \mathbf{n}$ such that $y$ covers $x$, there are exactly $t-1$ functions $f\in F$ such that $f(x) < f(y)$. Then the function $w$ that assigns weight $t-1$ to each element of $F$ and weight $0$ to each monotone function not in $F$ is a fractional $t$--realiser of $\mathbf{n}$ with total weight $\frac{n-1}{t-1}$.

 Let $F$ be the set of all monotone functions $f$ from $\mathbf{n}$ to $\mathbf{t}$ with the following properties:
\begin{enumerate}
\item $f$ is surjective;
\item for each $x\in \mathbf{t}$ that is not the top or bottom element, $|f^{-1}\{x\}| \leq 2$;
\item for all $x < y < z\in \mathbf{t}$, if $|f^{-1}(x)| \geq 2$ and $|f^{-1}(z)| \geq 2$, then $|f^{-1}(y)| \geq 2$.
\end{enumerate}
\noindent
For example, when $n = 9$ and $t = 4$, $F$ consists of the following functions:
\begin{align*}
&123456 \ \ 7 \ \ 8 \ \ 9 \\
&12345 \ \ 67 \ \ 8 \ \ 9 \\
&1234 \ \ 56 \ \ 78 \ \ 9 \\
&123 \ \ 45 \ \ 67 \ \ 89 \\
&12 \ \ 34 \ \ 56 \ \ 789 \\
&1 \ \ 23 \ \ 45 \ \ 6789 \\
&1 \ \ 2 \ \ 34 \ \ 56789 \\
&1 \ \ 2 \ \ 3 \ \ 456789.
\end{align*}

Now we will show that $F$ has the desired properties. First, denote the bottom and top elements of $\mathbf{t}$ by $\alpha$ and $\omega$ respectively, and define $A_f = |f^{-1}\{\alpha\}|$ and $\Omega_f = |f^{-1}\{\omega\}|$. If $n \geq 2t-1$, then there are $n-2t+1$ different functions $f\in F$ such that $A_f \geq 2$ and $\Omega_f \geq 2$, $t-1$ functions $f\in F$ such that $\Omega_f = 1$, and $t-1$ functions $f\in F$ such that $A_f = 1$, so $|F| = n-1$. Otherwise, $n \leq 2t-2$. In this case, the number of functions $f\in F$ such that $A_f = \Omega_f =1$ is $2t-n-1$, the number of $f\in F$ such that $A_f \geq 2$ is $n-t$, and the number of $f\in F$ such that $\Omega_f \geq 2$ is $n-t$, so $|F| = n-1$.

 Now label $\mathbf{n} = \{x_1,x_2,\dots ,x_n\}$ and $\mathbf{t} = \{y_1,y_2,\dots ,y_t\}$ in order. For each $i\in[n-1]$, we must show that there are $t-1$ functions $f\in F$ that separate $x_i$ and $x_{i+1}$. This is the same as showing that there are $n-t$ functions $f\in F$ such that $f(x_i) = f(x_{i+1})$. For each $i\in[n-1]$ and each $j\in[t]$, let $F_{i,j}$ be the number of functions $f\in F$ such that $f(x_i) = f(x_{i+1}) = y_j$. First, observe that
\begin{align}
F_{i,1} =
\begin{cases}
n-t+1-i &\text{if } i\leq n-t,\\
0 &\text{otherwise},
\end{cases}
\end{align}
and that 
\begin{align}
F_{i,t} =
\begin{cases}
i-t+1 &\text{if } i\geq t,\\
0 &\text{otherwise}.
\end{cases}
\end{align}
For $2\leq j \leq t-1$, it's easy to see that, if there is an $f\in F$ such that $f(x_i) = f(x_{i+1}) = y_j$, then it is unique. Therefore $F_{i,j}$ is either $1$ or $0$, and
\begin{align}
F_{i,j} =
\begin{cases}
1 &\text{if } j \leq i \leq n-t+j-1,\\
0 &\text{otherwise}.
\end{cases}
\end{align}
Finally, for each $i\in[n-1]$, the number of functions $f\in F$ such that $f(x_i) = f(x_{i+1})$ is equal to $\sum\limits_{j=1}^t F_{i,j}$. To compute this sum, we need to consider four cases. If $t \leq i \leq n-t$, then
\begin{equation}\begin{split}
\sum\limits_{j=1}^t F_{i,j} = n-2t+2+\left|\{j : 2\leq j \leq t-1, F_{i,j} = 1\}\right| = \\
n-2t+2 + (t-2) = n-t.
\end{split}\end{equation}
If $i \leq n-t$ and $i \leq t-1$, then
\begin{equation}\begin{split}
\sum\limits_{j=1}^t F_{i,j} = n-t+1-i+\left|\{j : 2\leq j \leq t-1, F_{i,j} = 1\}\right| = \\
n-t+1-i + (i-1) = n-t.
\end{split}\end{equation}
If $i \geq n-t+1$ and $i \geq t$, then
\begin{equation}\begin{split}
\sum\limits_{j=1}^t F_{i,j} = n-t+1-i+\left|\{j : 2\leq j \leq t-1, F_{i,j} = 1\}\right| = \\
i-t+1 + (n-i-1) = n-t.
\end{split}\end{equation}
If $n - t +1 \leq  i \leq  t-1$, then
\begin{equation}
\sum\limits_{j=1}^t F_{i,j} = \left|\{j : 2\leq j \leq t-1, F_{i,j} = 1\}\right| =
i - (i-n+t+1)+1 = n-t.
\end{equation}
In all four cases, $\left| \{f\in F : f(x_i) = f(x_{i+1})\} \right| = n-t$, so there are $t-1$ functions in $F$ that separate $x_i$ from $x_{i+1}$.
\end{proof}

 We now define a concept that will be useful in proving lower bounds on fractional local $t$--dimension.
Given a poset $P$ and integer $t\geq 2$, a \emph{fractional local $t$--antirealiser} of $P$ is an ordered pair of functions $(I,D)$, where $I : \{(x,y)\in P^2 : x \not\geq y\} \to [0,1]$ and $D : P \to [0,1]$, such that $\sum_{x\in P} D(x) = 1$ and, for each monotone partial function $f : P \to \mathbf{t}$,
\begin{equation}
\sum\limits_{f(x) < f(y)} I(x,y) \leq \sum\limits_{x\in \dom f} D(x).
\end{equation}
A fractional local $t$--antirealiser can be thought of an an obstacle to constructing a fractional local $t$--realiser with small local weight at every point. It can be shown using the strong linear programming duality theorem that $\fltdim{t}{P}$ is equal to the maximum of $\sum\limits_{x\not\geq y} I(x,y)$ over all fractional local $t$--antirealisers $(I,D)$ of $P$.


 Other types of fractional antirealisers can be defined in a similar way; for example, we can define a fractional $t$--antirealiser of $P$ as a function  $I : \{(x,y)\in P^2 : x \not\geq y\} \to [0,1]$ such that $\sum_{f(x) < f(y)} I(x,y) \leq 1$ for all monotone total functions $f:P\to\mathbf{t}$. The fractional $t$--dimension of $P$ is then equal to the maximum of $\sum_{x\not\geq y}I(x,y)$ over all $t$--antirealisers $I$ of $P$. In fact, we have already used fractional $t$--antirealisers implicitly in the proof of the lower bound in Theorem~\ref{thm:ftdimchain}.

\begin{proposition}\label{prop:fltdimantichain}
For all $t\geq 2$, $\ftdim{t}{A_n} = \frac{2t}{t-1}-o(1)$ as $n\to\infty$, and the same is true of $\fltdim{t}{A_n}$.
\end{proposition}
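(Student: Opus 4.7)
The plan is to combine the upper bound $\ftdim{t}{A_n} \leq \frac{2t}{t-1}$ (already established by the uniform weighting mentioned in the section) with a matching lower bound on $\fltdim{t}{A_n}$, constructed by exhibiting a fractional local $t$--antirealiser whose total weight approaches $\frac{2t}{t-1}$ as $n\to\infty$. Since $\fltdim{t}{A_n} \leq \ftdim{t}{A_n} \leq \frac{2t}{t-1}$ already, this sandwiches both quantities to $\frac{2t}{t-1} - o(1)$.

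Exploiting the full symmetry of $A_n$, I would try the symmetric ansatz $D(x) = 1/n$ for every $x \in A_n$ and $I(x,y) = c/n^2$ for every ordered pair $x \neq y$, where $c = \frac{2t}{t-1}$ (which will be the largest value that preserves admissibility). The total weight is immediately
\[\sum_{x\not\geq y} I(x,y) \;=\; n(n-1)\cdot \frac{c}{n^2} \;=\; \frac{2t}{t-1}\left(1 - \tfrac{1}{n}\right),\]
which equals $\frac{2t}{t-1} - o(1)$ as $n\to\infty$ and therefore delivers the desired lower bound once admissibility is verified.

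To check the antirealiser inequality, fix any monotone partial function $f : A_n \to \mathbf{t}$ (monotonicity is automatic on an antichain), set $k = |\dom f|$, and let $a_j = |f^{-1}(j)|$ so that $\sum_{j=1}^t a_j = k$. Cauchy--Schwarz gives $\sum_j a_j^2 \geq k^2/t$, so
\[\bigl|\{(x,y)\in(\dom f)^2 : f(x) < f(y)\}\bigr| \;=\; \tfrac{1}{2}\Bigl(k^2 - \sum_j a_j^2\Bigr) \;\leq\; \frac{k^2(t-1)}{2t}.\]
The left-hand side of the antirealiser constraint is therefore at most $\frac{c}{n^2}\cdot\frac{k^2(t-1)}{2t} = k^2/n^2$, while the right-hand side is $\sum_{x\in\dom f} D(x) = k/n$; the required inequality $k^2/n^2 \leq k/n$ follows instantly from $k\leq n$.

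I expect no real obstacle here. The only subtlety is recognising the correct symmetric ansatz and reading off the saturating value $c = \frac{2t}{t-1}$ from the case $k = n$; any asymmetric weighting would significantly complicate the bookkeeping, whereas the Cauchy--Schwarz step here reduces the verification to a single line. The same antirealiser automatically witnesses the lower bound for $\ftdim{t}{A_n}$ via the inequality $\fltdim{t}{} \leq \ftdim{t}{}$, which closes the proof.
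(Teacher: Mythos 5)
Your argument is correct and coincides with the paper's: the same symmetric antirealiser $D(x)=1/n$, $I(x,y)=\frac{2t}{(t-1)n^2}$ is used, and your Cauchy--Schwarz step is just a direct proof of the Tur\'an-type bound the paper cites for the number of separated ordered pairs. The combination with the trivial upper bound and the inequality $\fltdim{t}{A_n}\leq\ftdim{t}{A_n}$ is exactly how the paper closes the argument.
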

\begin{proof}
As mentioned earlier, the upper bound $\ftdim{t}{A_n} \leq \frac{2t}{t-1}$ is trivial. In fact, we need only assign positive weight to nonconstant functions, so
\begin{equation}
\ftdim{t}{A_n} \leq \frac{2t}{t-1}\cdot t^{-n}(t^n-t) = \frac{2t}{t-1}\left(1-t^{1-n}\right).
\end{equation}

 Let $D(x) = \frac{1}{n}$ for all $x\in A_n$ and let $I(x,y) = \frac{2t}{(t-1)n^2}$ for all $x\neq y$.
Suppose $f$ is a partial function from $A_n$ to $\mathbf{t}$ whose domain has $k$ elements. Then $f$ separates at most $\frac{t-1}{2t}k^2$ ordered pairs (i.e., the number of edges in a $t$-partite Tur\'an graph on $k$ vertices), so
\begin{equation}
\sum\limits_{f(x)<f(y)} I(x,y) \leq \frac{t-1}{2t}k^2\cdot \frac{2t}{(t-1)n^2} = \frac{k^2}{n^2} \leq \frac{k}{n}.
\end{equation}
Therefore $(I,D)$ is a fractional local $t$--antirealiser of $A_n$, so
\begin{equation}
\fltdim{t}{A_n} \geq \frac{2t}{t-1}\left(1-\frac{1}{n}\right).
\end{equation}
\end{proof}


\subsection{Fractional local \texorpdfstring{$t$}{t}-dimension of chains}

 Unlike the other dimension variants, determining the fractional local $t$--dimension of a chain is not trivial, and in general we are unable to determine the exact value of $\fltdim{t}{\mathbf{n}}$. An argument similar to the proof of Proposition~\ref{prop:ltdimchain} shows that, for all integers $t\geq 2$ and $n\in \N$, $\fltdim{t}{\mathbf{tn}} \leq \fltdim{t}{\mathbf{n}}+1$. Therefore an improvement over the trivial bound $\fltdim{t}{\mathbf{n}} \leq \lceil\log_t n\rceil$ for any chain automatically yields an improvement (by an additive constant) for all chains.

 The smallest $n$ such that $\fltdim{2}{\mathbf{n}} < \ltwodim{\mathbf{n}}$ is $5$. Indeed, the fractional local $2$--antirealiser shown in Figure~\ref{fig3} shows that $\fltwodim{\mathbf{3}}$ is at least $2$, and by the trivial bound $\fltwodim{\mathbf{3}} \leq \ltwodim{\mathbf{3}} = 2$, it is exactly $2$. By monotonicity, the same is true for $\fltwodim{\mathbf{4}}$. The fractional local $2$--antirealiser in Figure~\ref{fig4} shows that $\fltwodim{\mathbf{5}} \geq \frac{5}{2}$.
 \pagebreak

\begin{figure}[htbp]
\centering
\begin{tikzpicture}
[
  fsnode/.style={fill=black},
  ssnode/.style={fill=black},
  every fit/.style={ellipse,draw,inner sep=2pt,text width=0mm},
  shorten >= 2pt,shorten <= 2pt, auto, thick]

{
[start chain=going right,node distance=22mm]
\foreach \a in {1,2,...,3}{
\node[fsnode,on chain,draw,circle,inner sep=2pt] (f\a) {};
}
}

\draw[black](f1) -- (f2) node [midway] {$1$};
\draw[black](f2) -- (f3) node [midway] {$1$};
\draw[black](f2) node [text width=1mm,yshift=-15, fill=white] {$1$};
\end{tikzpicture}

\caption{A fractional local $2$--antirealiser of $\mathbf{3}$.}
\label{fig3}
\end{figure}

\begin{figure}[htbp]
\centering
\begin{tikzpicture}
[
  fsnode/.style={fill=black},
  ssnode/.style={fill=black},
  shorten >= 2pt,shorten <= 2pt, auto, thick]

{
[start chain=going right,node distance=22mm]
\foreach \a in {1,2,...,5}{
\node[fsnode,on chain,draw,circle,inner sep=2pt] (f\a) {};
}
}

\draw[black](f1) -- (f2) node [midway,yshift=-.5mm] {$\tfrac{1}{2}$};
\draw[black](f2) -- (f3) node [midway,yshift=-.5mm] {$\tfrac{1}{2}$};
\draw[black](f3) -- (f4) node [midway,yshift=-.5mm] {$\tfrac{1}{2}$};
\draw[black](f4) -- (f5) node [midway,yshift=-.5mm] {$\tfrac{1}{2}$};\draw[black](f1) -- (f2);
\draw[black](f2) -- (f3);
\draw[black](f3) -- (f4);
\draw[black](f4) -- (f5);
\draw[black](f2) node [text width=1mm,yshift=-15, fill=white] {$\frac{1}{2}$};
\draw[black](f4) node [text width=1mm,yshift=-15, fill=white] {$\frac{1}{2}$};

\draw[black](f3) node [text width=1mm,yshift=11mm, fill=white] {$\frac{1}{2}$};
\draw[bend right=30] (f2) to [out=70, in=110] (f4);

\end{tikzpicture}

\caption{A fractional local $2$--antirealiser of $\mathbf{5}$.}
\label{fig4}
\end{figure}

\noindent
The following fractional local $2$--realiser covers each point with total weight at most $\frac{5}{2}$, showing that $\fltwodim{\mathbf{5}} = \frac{5}{2}$. We denote by $w(ab\dots c \ xy\dots z)$ the weight of the partial monotone function that sends $a,b,\dots c$ to $1$ and $x,y,\dots z$ to $2$.

\begin{align*}
w(123 \ 45) &= \tfrac{1}{2} \\
w(12 \ 345) &= \tfrac{1}{2} \\
w(12 \ 3) &= \tfrac{1}{2} \\
w(3 \ 45) &= \tfrac{1}{2} \\
w(1 \ 2) &= 1\\
w(4 \ 5) &= 1.
\end{align*}
\noindent
It follows that $\fltwodim{\mathbf{n}} \leq \lceil\log \frac{n}{5}\rceil + \frac{5}{2}$ for all $n\in \N$.

 To find lower bounds on the fractional local $t$--dimension of chains, we reformulate the problem as follows. Suppose $n\in\N$, and consider the complete graph $K_n$ with vertex set $[n]$.
Given a natural number $t\geq 2$, an ordered $t$-partite graph is a complete $t$-partite subgraph of $K_n$ whose parts can be ordered so that every element of the first part is less than every element of the second part, every element of the second is less than every element of the third, and so on. Then $\fltdim{t}{\mathbf{n}}$ is equal to the maximum value of $\sum_{e\in [n]^{(2)}} I(e)$ over all pairs of functions $D: [n]\to[0,1]$ and $I : [n]^{(2)}\to [0,1]$ such that $\sum_{v\in [n]}D(v) = 1$ and, for every ordered $t$-partite graph $G$, $\sum_{e\in E(G)}I(e)\leq \sum_{v\in V(G)} D(v)$.

 Given an edge $xy\in E(K_n)$, the \emph{length} of $xy$, denoted $\operatorname{length}(xy)$, is $|x-y|$. For each $\ell\in[n-1]$, $K_n$ contains $n-\ell$ edges of length $\ell$. An ordered $t$-partite graph contains at most $(t-1)\ell$ edges of length $\ell$.

 We will prove a lower bound on the fractional local $t$-dimension of chains using the following observation by Hunter Spink~\cite{hunter}. Let $f:[n-1] \to \R$ be a monotone decreasing function and let $B$ be an ordered $t$-partite graph with $k$ vertices. We claim that $\sum_{e\in E(B)} f(\operatorname{length}(e))$ is maximised when $B$ is compressed (i.e., the vertex set of $B$ is a contiguous subset of $[n]$) and $B$ is a Tur\'an graph. To prove the first claim, take the largest contiguous set of vertices in $B$ containing the leftmost vertex, and move all the vertices in this set one step to the right. Observe that this does not increase the length of any edge in $B$. Repeat this process until $B$ is compressed. For the second claim, assume $B$ is compressed. Label the parts of $B$ $B_1,B_2,\dots,B_t$ in order and suppose that $|B_{i}| > |B_{i+1}|$. Let $v$ be the last vertex of $B_i$. If we move $v$ to $B_{i+1}$, we lose an edge of length $\ell$ for each $\ell\in\big[|B_{i+1}|\big]$ and gain an edge of length $k$ for each $i\in[\big|B_{i}\|-1\big]$. Since $\big[|B_i|-1\big] \subseteq \big[|B_{i+1}|\big]$, $\sum_{e\in E(B)} f(\operatorname{length}(e))$ does not increase, and, by repeating this process, we can transform $B$ into a Tur\'an graph.

 Before proving the theorem, we need to introduce some notation. Given an integer $a\geq 0$, the $a^\textrm{th}$ harmonic number, denoted $H_a$, is equal to $\sum_{k=1}^a \frac{1}{k}$. For all $a\in\N$, $H_a \geq \ln a + \gamma$, where $\gamma$ is Euler's constant. If $1 \leq a \leq b$, then
\begin{equation}\label{eq:harmonicdifference}
H_b - H_a = \sum\limits_{k=a+1}^b \frac{1}{k} = \int_a^b\frac{dx}{\lceil x \rceil}\leq \int_a^b\frac{dx}{x} = \ln b - \ln a.
\end{equation}

\begin{theorem}\label{thm:fltdimchain}
For every integer $t\geq 2$,
\[\fltdim{t}{\mathbf{n}} \geq \log_{\sqrt{e}\cdot t} n - O_t(1)\]
as $n\to\infty$.
\end{theorem}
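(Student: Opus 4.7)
The plan is to construct an explicit fractional local $t$-antirealiser $(I, D)$ of $\mathbf{n}$ achieving the claimed total weight. Take $D(v) = 1/n$ uniformly on $[n]$, and for each pair $u < v$ in $[n]$ set $I(u, v) = c/(v - u)$ with $c = 1/(n(\ln t + 1/2))$. Using the standard asymptotic $H_{n-1} = \ln n + \gamma + O(1/n)$,
\[\sum_e I(e) = c\bigl(nH_{n-1} - (n - 1)\bigr) = \frac{\ln n + \gamma - 1 + O(1/n)}{\ln t + 1/2} = \log_{\sqrt{e}\cdot t} n - O_t(1),\]
matching the theorem's bound.

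The substance of the argument is verifying the antirealiser condition. By Spink's observation, one need only verify $\sum_{e\in E(G)} I(e) \leq \sum_{v\in V(G)} D(v)$ on compressed Turán graphs $G = T_{k,t}$ with $k \in [2, n]$. Setting $\sigma(k) = \sum_{e \in E(T_{k,t})} 1/\operatorname{length}(e)$, this reduces to $\sigma(k) \leq k(\ln t + 1/2)$ for each such $k$.

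To establish this, I would compute $\sigma(k)$ by subtracting within-part contributions from the $K_k$-sum: a contiguous part of size $s$ contributes $sH_{s-1} - (s - 1)$, and $K_k$ itself contributes $kH_{k-1} - (k - 1)$. Writing $k = qt + r$ with $0 \leq r < t$ and $q \geq 1$ yields the identity
\[\sigma(k) = k(H_{k-1} - H_{q-1}) - r(1 + 1/q) - (t - 1).\]
Applying Equation~\ref{eq:harmonicdifference} in the form $H_{k-1} - H_{q-1} \leq 1/q + \ln(k/q) \leq 1/q + \ln t + r/(qt)$ and simplifying gives $\sigma(k)/k \leq \ln t + 1/k + r^2/(qtk)$, and a routine case check using $k = qt + r \geq 2$ together with $r \leq t - 1$ confirms the right-hand side is at most $\ln t + 1/2$.

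The main obstacle is the careful case analysis for this last inequality: balanced ($r = 0$, immediate from $1/k \leq 1/2$), unbalanced ($r \geq 1$, the worst case being $q = 1$, $r = t - 1$, which gives $k = 2t - 1 \geq 2 + 2(t-1)^2/t$ exactly when $t \geq 2$), and the degenerate regime $q = 0$ (where $k \leq t - 1$ and $T_{k,t} = K_k$, so $\sigma(k) = kH_{k-1} - (k - 1)$ and the bound follows from $H_{k-1} \leq \ln(k-1) + 1$ together with $k \leq t - 1$). Throughout, the key is to apply the harmonic-number estimates sharply enough that no additive constant slips in to degrade the $\log_{\sqrt{e}\cdot t} n$ coefficient in the final answer.
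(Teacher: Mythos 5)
Your proposal is correct and follows essentially the same route as the paper's proof: the same uniform $D$, the same weights $I(e)=\frac{1}{n(\ln t+1/2)}\cdot\frac{1}{\operatorname{length}(e)}$, the same reduction via Spink's observation to compressed Tur\'an graphs, and equivalent harmonic-number estimates (your identity $\sigma(k)=k(H_{k-1}-H_{q-1})-r(1+1/q)-(t-1)$ is algebraically the same as the paper's $k(H_k-H_q)-r$, and your case analysis matches the paper's $q\geq 1$ versus $q=0$ split). No gaps.
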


\begin{proof}
Let $D(v) = \frac{1}{n}$ for each $v\in[n]$. For each edge $e\in E(K_n)$, let $I(e) = \frac{2}{(2\ln t+1)n}\cdot \frac{1}{\operatorname{length}(e)}$.

 Suppose $B$ is an ordered $t$-partite subgraph of $K_n$ with $k$ vertices. We want to show that $\sum_{e\in E(B)} I(e) \leq \sum_{v\in V(B)} D(v)$. By the above observation and the fact that $\sum_{v\in V(B)} D(v)$ depends only on $k$, we may assume that $B$ is a compressed Tur\'an graph. Write $k = tq+r$, where $q$ and $r$ are integers and $0\leq r\leq t-1$. For each natural number $\ell$, the number of edges of length $\ell$ in $E(B)$ is equal to the number of edges of length $\ell$ in a compressed $K_k$, minus the number of edges of length $\ell$ in $t-r$ compressed copies of $K_{q}$ and $r$ compressed copies of $K_{q+1}$. For $1\leq \ell \leq q-1$, the number of edges of length $\ell$ is therefore
\begin{equation}\label{eq:shortedges}
tq+r-\ell -(t-r) (q-\ell) - r(q+1-\ell) =
(t-1)\ell
.
\end{equation}
The number of edges of length $q$ is
\begin{equation}\label{eq:lengthqedges}
tq+r-q - r = (t-1)q.
\end{equation}
For $q+1 \leq \ell \leq k$, the number of edges of length $\ell$ is
\begin{equation}\label{eq:longedges}
tq+r-\ell.
\end{equation}
Now let $c = \frac{2}{(2\ln t+1)n}$. It follows from equations~\ref{eq:shortedges},~\ref{eq:lengthqedges}, and~\ref{eq:longedges} that
\begin{equation}\begin{split}
\sum\limits_{e\in E(B)} I(e) = c\sum\limits_{\ell=1}^{q}\frac{(t-1)\ell}{\ell} + c\sum\limits_{\ell=q+1}^{k}\frac{tq+r-\ell}{\ell}
= \\
c\big( (t-1)q + k(H_{k} - H_{q}) - (tq+r-q) \big) = \\
c\big( k(H_{k} - H_q) - r \big)
.
\end{split}\end{equation}
If $q \geq 1$, then, by inequality~\ref{eq:harmonicdifference},
\begin{equation}\begin{split}
c\big( k(H_k - H_q)-r \big) \leq c\left( k\ln \frac{tq+r}{q} - r \right) \leq \\
c\left( k\ln t + k\frac{r}{tq} - r \right) = c\left( k\ln t + \frac{r^2}{tq} \right)
,
\end{split}\end{equation}
and, since $k > 2r$,
\begin{equation}\begin{split}
c\left( k\ln t + \frac{r^2}{tq} \right)
<
c\left( k\ln t + \frac{r}{q} \right) \leq c\left( k\ln t + r \right)
< \\
c\left( k\ln t + \frac{k}{2} \right)
\leq
ck\left( \ln t + \frac{1}{2} \right) = \frac{k}{n}
.
\end{split}\end{equation}
If $q=0$, then $k = r \leq t-1$ and $H_q = 0$, so
\begin{equation}\begin{split}
c\big( k(H_k - H_q)-r \big) = c\big( k(H_k-1) \big) \leq ck\ln k < ck\ln t \leq \frac{k}{n}
.
\end{split}\end{equation}
In both cases,
$\sum\limits_{e\in E(B)} I(e) \leq \frac{k}{n} = \sum\limits_{v\in V(B)} D(v)$.
Therefore,
\begin{equation}\begin{split}
\fltdim{t}{\mathbf{n}} \geq \sum\limits_{e\in [n]^{(2)}} I(e) =
c\sum\limits_{\ell = 1}^{n} \frac{n-\ell}{\ell} = \\
cn(H_n-1) \geq
\frac{2}{2\ln t+1}(\ln n + \gamma - 1) = \\
\log_{\sqrt{e}\cdot t} n - \frac{2-2\gamma}{2\ln t+1}.
\end{split}\end{equation}
\end{proof}

\subsection{Suborders of the hypercube and posets of bounded degree}
 In this subsection, we consider the fractional $t$-dimension of two-layer suborders of the hypercube.

 Brightwell and Scheinerman~\cite{brightwellscheinerman} proved that, for all $n\in\N$ and $2\leq k\leq n-1$, $\fdim{\cube{n}{1,k}} = k+1$. Smith and Trotter~\cite{smithtrotter} determined the exact value of $\lim\limits_{n\to\infty} \fldim{\cube{n}{1,k}}$ for all $k$, and showed that it is equal to $\frac{k}{\ln k-\ln\ln k -o(1)}$ as $k\to\infty$.

 The following theorem shows that Brightwell and Sheinerman's result is within a constant factor of the correct value for fractional $t$--dimension.
\begin{theorem}\label{thm:frac2layers}
For every integer $k \geq 1$, as $n\to\infty$,
\[\ftwodim{\cube{n}{1,k}} \to \left(1-\tfrac{1}{k+1}\right)^{-k}\cdot(k+1) \leq e(k+1).\]
\end{theorem}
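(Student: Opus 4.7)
The plan is to prove matching upper and lower bounds by a primal-dual argument; the upper bound will hold uniformly in $n$ (for all $n \geq k+1$) and the lower bound will be asymptotic.

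For the upper bound, I will construct a fractional $2$--realiser by randomisation. Fix $p = 1/(k+1)$, pick $S \subseteq [n]$ by including each element independently with probability $p$, and define a random monotone function $F : \cube{n}{1,k} \to \mathbf{2}$ by $F(\{i\}) = 1$ iff $i \in S$, and $F(A) = 1$ iff $A \cap S \neq \emptyset$ for $A \in [n]^{(k)}$. Monotonicity is immediate: if $\{i\} \leq A$ and $F(\{i\}) = 1$ then $A \cap S \ni i$. A case-by-case computation over the five types of pair $x \not\geq y$ shows that $\min_{x \not\geq y}\Pr[F(x) < F(y)] = p(1-p)^k = k^k/(k+1)^{k+1}$, with the minimum attained at pairs $(A, \{i\})$ of $k$-set over singleton with $i \notin A$, and at pairs $(A, B)$ of distinct $k$-sets with $|A \cap B| = k - 1$; all other types have strictly larger separation probability via routine inequalities of the form $(1 + 1/k)^{k-1} \geq 1$. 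Then $w(f) := \frac{(k+1)^{k+1}}{k^k} \Pr[F = f]$ defines a fractional $2$--realiser with total weight $(k+1)^{k+1}/k^k = (1 - \tfrac{1}{k+1})^{-k}(k+1)$, and the bound $\le e(k+1)$ follows from $(1+1/k)^k \leq e$.

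For the matching lower bound I will use a fractional $2$--antirealiser supported on the extremal pair type. Set $I(A, \{i\}) = c$ whenever $A \in [n]^{(k)}$ and $i \notin A$, and zero otherwise. A monotone total function $f : \cube{n}{1,k} \to \mathbf{2}$ is determined by the pair $(S, T)$ where $S = \{i : f(\{i\}) = 1\}$ and $T = \{A : f(A) = 1\}$, subject to the monotonicity requirement $T \supseteq \{A : A \cap S \neq \emptyset\}$. The pairs $(A, \{i\})$ in the support of $I$ that are separated by $f$ are those with $i \in S$ and $A \notin T$; monotonicity then forces $i \notin A$ (otherwise $A \cap S \ni i$ would put $A$ in $T$), so the count equals $|S| \cdot |\{A : A \notin T\}| \leq s \binom{n-s}{k}$, with $s = |S|$ and equality when $T$ is minimal. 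Hence the antirealiser constraint reduces to $c \leq 1/\max_{0 \leq s \leq n} s \binom{n-s}{k}$, and the total weight is $c \binom{n}{k}(n-k)$. A standard asymptotic computation gives $\max_s s\binom{n-s}{k} = (1 + o(1)) \cdot n^{k+1} k^k / [(k+1)^{k+1} k!]$ (maximum near $s = n/(k+1)$) and $\binom{n}{k}(n - k) = (1 + o(1)) \cdot n^{k+1}/k!$, so $\ftwodim{\cube{n}{1,k}} \geq (1 - o(1)) \cdot (k+1)^{k+1}/k^k$, matching the upper bound in the limit.

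The main obstacle is identifying the right test distribution for the upper bound (Bernoulli$(1/(k+1))$ on the singleton layer, extended minimally to the $k$-set layer) and the matching dual weighting for the lower bound. Once these choices are isolated, both halves reduce to elementary probability computations and the standard optimisation of $s \binom{n-s}{k}$; the fact that the same product parameter $p = 1/(k+1)$ governs both the extremal primal distribution and the extremal size $s = n/(k+1)$ of the antirealiser's ``support mass'' is what makes the bounds pinch.
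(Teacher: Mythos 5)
Your proposal is correct and takes essentially the same approach as the paper: the same primal construction (monotone functions induced from a density-$\tfrac{1}{k+1}$ random subset of the singleton layer, extended to the $k$-set layer by taking maxima) and the same dual certificate (uniform weight on pairs $(A,\{i\})$ with $i\notin A$, reduced via monotonicity to the optimisation of $s\binom{n-s}{k}$ at $s=n/(k+1)$). The only difference is that you use a Bernoulli product measure where the paper uses the uniform measure on subsets of size exactly $n/(k+1)$, a cosmetic variation.
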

\begin{proof}
Every function $f$ from $[n]$ to $\mathbf{2}$ can be extended to a monotone function $f':\cube{n}{1,k}\to \mathbf{2}$, where $f'(A) = \max\{f(a) : a\in A\}$. Assume $n$ is a multiple of $k+1$ and write $\ell=k+1$, $m = \frac{n}{\ell}$. Define a function $w$ that assigns weight $\binom{\ell (m-1)}{m-1}^{-1}$ to every function $f'$, where $f : [n] \to \mathbf{2}$ and $|f^{-1}\{1\}| = \frac{k}{k+1}n$, and weight $0$ to every other monotone function. For every pair $(A,x)$ where $A\in [n]^{(k)}$ and $x\in [n]\setminus A$, there are $\binom{\ell(m-1)}{m-1}$ functions $f : [n] \to \mathbf{2}$ such that $f'(A) < f'\{x\}$, so the total weight of all such functions is $1$. It's easy to check that all other non-relations are covered with total weight at least $1$. Now the total number of monotone functions with positive weight is $\binom{\ell m}{m}$, so the total weight of all these functions is
\begin{equation}
\begin{split}
\binom{\ell m}{m}\binom{\ell (m-1)}{m-1}^{-1}
=
\frac{(\ell m)!}{m!((\ell-1)m)!} \cdot \frac{(m-1)!((\ell-1)(m-1))!}{(\ell(m-1))!}
= \\
\ell\cdot \frac{(\ell m-1) \cdot (\ell m - 2) \cdot \dots \cdot (\ell(m-1)+1)}{((\ell-1)m) \cdot ((\ell-1)m-1) \cdot \dots \cdot ((\ell-1)(m-1)+1)}
\leq \\
\ell\left(\frac{\ell m}{(\ell-1)(m-1)}\right)^{\ell-1},
\end{split}
\end{equation}
which goes to $\ell\cdot\left(\frac{\ell}{\ell-1}\right)^{\ell-1} = \ell\cdot\left(1-\frac{1}{\ell}\right)^{1-\ell} \leq e\ell$ as $m\to\infty$. Because $\ftwodim{\cube{n}{1,k}} = \ftwodim{\cube{\ell m}{1,\ell-1}}$ is monotone increasing in $m$, we have $\ftwodim{\cube{n}{1,\ell-1}} \leq \ell\cdot\left(1-\frac{1}{\ell}\right)^{1-\ell}$ for all $n$.

 Now, for the lower bound, we will construct a fractional $2$-antirealiser of $\cube{n}{1,k}$. As before, let $\ell = k+1$ and assume $n = \ell m$, where $m$ is an integer. For every pair $(A,x)$ with $A\in [n]^{(k)}$ and $x\in [n]\setminus A$, let $I(A,x) = \frac{k!}{m(km)^k}$. Now suppose $f$ is a montone function from $\cube{n}{1,k}$ to $\mathbf{2}$, and define a function $g:[n] \to \mathbf{2}$, where $g(x) = f\{x\}$. Now, if given $A\in [n]^{(k)}$ and $x\in [n]\setminus A$, if $f(A) < f\{x\}$, then $g'(A) < f\{x\}$. We may therefore assume that $f = g'$ without reducing the number of separated pairs. Now let $p$ be the number of elements $x\in[n]$ such that $g(x) = 2$. The number of pairs $(A,x)$ separated by $g'$ is $p\cdot\binom{n-p}{k} \leq \frac{1}{k!}p(n-p)^k$, and the right side of this inequality is maximised when $p = \frac{n}{k+1} = m$. Hence every monotone function separates at most $\frac{1}{k!}m(km)^k$ pairs, so the sum of $I(A,x)$ over all such pairs is at most $1$. Therefore $I$ is a fractional $2$-antirealiser of $\cube{n}{1,k}$, so
\begin{equation}\begin{split}
\ftwodim{\cube{n}{1,k}} \geq \ell\binom{\ell m}{\ell}\cdot \frac{(\ell-1)!}{m\big((\ell-1)m\big)^{\ell-1}} = \\
\frac{(\ell m)^\ell - O(m^{\ell-1})}{m\big((\ell-1)m\big)^{\ell-1}} =
\ell\cdot \left(\frac{\ell}{\ell-1}\right)^{\ell-1} - O\left(\tfrac{1}{m}\right)
=\\
\left(1-\tfrac{1}{k+1}\right)^{-k}\cdot(k+1) - O\left(\tfrac{1}{n}\right)
.
\end{split}\end{equation}
\end{proof}

 Recall that the \emph{outdegree} of an element $x$ of a poset $P$ is the number of elements of $P$ that are strictly greater than $x$. Using Theorem~\ref{thm:frac2layers}, we can bound the fractional $2$-dimension of any poset by a function of its maximum outdegree. We first need the following lemma.

\begin{lemma}\label{lem:splitfrac}
Let $P$ be a poset and let $Q$ be the split of $P$. Then $\ftwodim{P} \leq \ftwodim{Q}$.
\end{lemma}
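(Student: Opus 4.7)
The plan is to take a fractional $2$-realiser $w$ of $Q$ and push it forward to a fractional $2$-realiser $w'$ of $P$ with the same total weight. The naive attempts (setting $f'(x) = f(x')$ or $f'(x) = f(x'')$) fail to be monotone on $P$, since $P'$ and $P''$ are antichains in $Q$ and there is no direct relation forcing $f(x') \leq f(y')$ or $f(x'') \leq f(y'')$ when $x \leq y$ in $P$. The trick is to read off only the $P''$-part of the preimage of $1$ and then take its down-closure in $P$.

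Concretely, for each monotone $f : Q \to \mathbf{2}$, set $B_f = \{x \in P : f(x'') = 1\}$, and for each subset $B \subseteq P$, define $g_B : P \to \mathbf{2}$ by $g_B(x) = 1$ if $x \in \mathord{\downarrow}_P B$ and $g_B(x) = 2$ otherwise. Then $g_B$ is monotone on $P$ because $\mathord{\downarrow}_P B$ is a downset by construction. Now define a weight function on monotone functions $P \to \mathbf{2}$ by
\[
w'(g) \;=\; \sum_{f : g_{B_f} = g} w(f).
\]
The total weight of $w'$ equals the total weight of $w$, so it suffices to verify that $w'$ is a fractional $2$-realiser of $P$.

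For the separation condition, fix a pair $x,y \in P$ with $x \not\geq y$; then $x'' \not\geq y'$ in $Q$. Since $w$ is a fractional $2$-realiser of $Q$, the total weight of $f$ with $f(x'') = 1$ and $f(y') = 2$ is at least $1$. For any such $f$, we have $x \in B_f$, hence $g_{B_f}(x) = 1$. Moreover, if $y$ were in $\mathord{\downarrow}_P B_f$, there would exist $z \in B_f$ with $y \leq z$, whence $y' \leq z''$ in $Q$ and $f(y') \leq f(z'') = 1$, contradicting $f(y') = 2$; therefore $g_{B_f}(y) = 2$. So every $f$ that separates $(x'', y')$ in $Q$ gives a $g_{B_f}$ that separates $(x,y)$ in $P$, and the total $w'$-weight separating $(x,y)$ is at least $1$.

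The only genuinely non-trivial step is choosing the right passage from monotone functions on $Q$ to monotone functions on $P$; once $B_f \mapsto g_{B_f}$ is in hand, monotonicity of $g_B$ is automatic (it is the indicator-complement of a downset), and the separation argument is a one-line check using the defining property $y' \leq z'' \iff y \leq z$ of the split.
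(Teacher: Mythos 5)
Your proof is correct and is essentially the paper's argument up to duality: the paper defines $f'(x) = \max\{f(y') : y \leq x\}$, reading off the $P'$-part and taking an up-closure of the preimage of the top value, while you read off the $P''$-part and take a down-closure of the preimage of the bottom value; the separation check via $x'' \not\geq y'$ is the same in both. Your bookkeeping $w'(g) = \sum_{f : g_{B_f} = g} w(f)$ is in fact slightly cleaner than the paper's, which does not explicitly handle several $f$ collapsing to the same pushed-forward function.
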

\begin{proof}
Let $w$ be a fractional $2$-realiser of $Q$. For each monotone $f : Q \to \mathbf{2}$, define a function $f':P\to\mathbf{2}$, where $f'(x) = \max\left\{ f(y') : y\leq x \right\}$. It's clear that $f'$ is monotone. Now, for each monotone $g:P\to\mathbf{2}$, let $w'(g) = w(f)$ if $g=f'$ for some monotone $f:Q\to\mathbf{2}$ and $w'(g) = 0$ otherwise. Suppose $a\not\geq_P b$. Then $a'' \not\geq_Q b'$, so the total $w$-weight of all montone functions $f$ such that $f(a'') = 0$ and $f(b') = 1$ is at least $1$. For each such $f$, $f'(c') = 0$ for all $c\leq a$, so $f'(a) = 0$, and $f'(b) = 1$. Hence the pair $(a,b)$ is separated with total weight at least $1$, so $w'$ is a fractional $2$-realiser of $P$ with the same total weight as $w$.
\end{proof}

\begin{corollary}\label{cor:MFD}
Let $P$ be a poset with maximum outdegree $\upsilon$. Then $\ftwodim{P} \leq e(\upsilon+2)$.
\end{corollary}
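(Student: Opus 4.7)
The plan is to combine Lemma~\ref{lem:splitfrac} with Theorem~\ref{thm:frac2layers} via an embedding into a two-layer slice of the Boolean lattice. Since the Boolean lattice is self-dual via set-complementation, the correspondence $f\mapsto 3-f$ gives a weight-preserving bijection between fractional $2$--realisers of $P$ and of the dual $P^*$, so $\ftwodim{P}=\ftwodim{P^*}$. Letting $R$ be the split of $P^*$ and applying Lemma~\ref{lem:splitfrac} to $P^*$, I obtain $\ftwodim{P}\leq\ftwodim{R}$. The relevant fact carried over from $P$ is that the principal upset $U[y]=\{z\in P:z\geq y\}$ satisfies $|U[y]|\leq\upsilon+1$ for every $y\in P$.

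Next, I will realise $R$ as a suborder of $\cube{N}{1,\upsilon+1}$ for a suitable $N$. For each $x\in P$, introduce a private padding set $S_x$ of size $(\upsilon+1)-|U[x]|$, chosen pairwise disjoint and disjoint from $P$, and set $N=|P|+\sum_x|S_x|$. Define $\phi(x')=\{x\}$ and $\phi(x'')=U[x]\cup S_x$, so every $\phi(x'')$ has size exactly $\upsilon+1$. Cross relations are immediate: $\phi(x')\subseteq\phi(y'')$ iff $x\in U[y]$ iff $x\geq y$ in $P$ iff $x'\leq_R y''$. Once the embedding is in hand, Theorem~\ref{thm:frac2layers} applied with $k=\upsilon+1$ together with the monotonicity of fractional $2$--dimension under suborders yields
\[
\ftwodim{P}\leq\ftwodim{R}\leq\ftwodim{\cube{N}{1,\upsilon+1}}\leq e(\upsilon+2).
\]

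The main obstacle will be verifying that the images $\phi(x'')$ and $\phi(y'')$ of distinct maximal elements of $R$ are incomparable, since two same-size subsets of $[N]$ are comparable in the Boolean lattice only if they coincide. If one of $S_x,S_y$ is nonempty, its private padding obstructs both inclusions; if both are empty then $|U[x]|=|U[y]|=\upsilon+1$, and $U[x]=U[y]$ forces $x\in U[y]$ and $y\in U[x]$, giving $x=y$, a contradiction. The key point which rescues the constant $e(\upsilon+2)$ rather than $e(\upsilon+3)$ is precisely that elements $x$ attaining $|U[x]|=\upsilon+1$ need no padding at all. One edge case requires separate attention: when $\upsilon=0$, the poset $P$ is an antichain and $\phi$ can collapse $x'$ with $x''$, but in that case Proposition~\ref{prop:fltdimantichain} yields $\ftwodim{P}=\ftwodim{A_{|P|}}\leq 4\leq 2e$ directly.
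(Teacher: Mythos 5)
Your proposal is correct and follows essentially the same route as the paper: pass to the split, use Lemma~\ref{lem:splitfrac}, dualise, and embed the resulting two-layer poset into $\cube{N}{1,\upsilon+1}$ so that Theorem~\ref{thm:frac2layers} applies. You merely perform the dualisation before the split rather than after (the two orders are equivalent), and you supply details the paper leaves implicit --- the padding sets, the incomparability of same-layer images, the self-duality of $\ftwodim{\cdot}$, and the degenerate case $\upsilon=0$ --- all of which check out.
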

\begin{proof}
Let $Q$ be the split of $P$. By Lemma~\ref{lem:splitfrac}, $\ftwodim{P} \leq \ftwodim{Q}$. Since $Q$ has maximum outdegree $\upsilon+1$, its dual can be embedded into $\cube{n}{1,\upsilon+1}$ for some large $n$. Therefore, by Theorem~\ref{thm:frac2layers}, $\ftwodim{Q} \leq e(\upsilon+2)$.
\end{proof}

\section{Open questions}

 By Theorem~\ref{thm:lowerbound} and Kierstead's theorem, we know that, for fixed $\ell < k$, $\ltdim{t}{\cube{n}{\ell,k}} = \Theta_{t,\ell,k}(\log n)$ as $n\to\infty$. However, the constant factors on the upper and lower bounds are very far apart, and we would like to know if they can be improved.

\begin{question}
Given $1 \leq \ell < k \leq n$ and $t\geq 2$, what is $\ltdim{t}{\cube{n}{\ell,k}}$? In particular, what is $\ltwodim{\cube{n}{1,2}}$?
\end{question}

 The local dimension of $\cube{n}{}$ is still unknown. The best known lower bound is $\Omega\left(\frac{n}{\log n}\right)$, but the only known upper bound is $n$. Maybe studying the local $t$--dimension of $\cube{n}{}$ will help solve this problem.

\begin{question}
What is $\ltdim{t}{\cube{n}{}}$ for $t\geq 3$? Is it ever strictly less than $n$? In general, what is $\ltdim{t}{\mathbf{s}^n}$ when $t > s$?
\end{question}

 The maximum local $t$--dimension of an $n$-element poset is $\Theta\left(\frac{n}{\log n}\right)$, with upper and lower bounds that do not depend on $t$. This leads to the next question.
\begin{question}
What is the maximum local $t$--dimension of an $n$-element poset? Does it depend on $t$?
\end{question}

 Of course, all of the natural questions asked of the other parameters (e.g., the maximum and minimum value for $n$-element posets, the value for the Boolean lattice and for its two-layer suborders, etc.) can be asked of fractional $t$--dimension and fractional local $t$--dimension as well.

 It follows from Theorem~\ref{thm:ftdimchain} that, for every integer $t\geq 2$ and every $n\in\N$, $\ftdim{t}{\mathbf{n}} = \lceil\ftdim{t}{\mathbf{n}}\rceil$. This motivates the following problem.

\begin{problem}
Characterise the posets $P$ for which $\tdim{t}{P} = \lceil\ftdim{t}{P}\rceil$.
\end{problem}


Proposition~\ref{prop:ltdimchain} and Theorem~\ref{thm:fltdimchain} together imply that $\fltdim{t}{\mathbf{n}} = \Theta_t(\log n)$ as $n\to\infty$, for every fixed $t\geq 2$. However, we do not have a formula for the exact fractional local $t$-dimension of a chain.

\begin{problem}
What is the exact value of $\fltdim{t}{\mathbf{n}}$, for all integers $n\geq t \geq 2$?
\end{problem}

By an argument similar to the proof of Inequality 3 of Proposition 2 in \cite{lewis}, for any $t\geq 2$ and all $m,n\in\N$, $\fltdim{t}{\mathbf{mn}} \leq \fltdim{t}{\mathbf{m}} + \fltdim{t}{\mathbf{n}}$. It follows that, if $\fltdim{t}{\mathbf{m}} < \log_t m$ for any $m$, then we can improve the trivial upper bound $\fltdim{t}{\mathbf{n}} \leq \lceil\log_t n\rceil$ by a constant factor for all $n$. However, we do not know of any examples of such $m$ for any $t$.

 One immediate corollary of Theorem~\ref{thm:frac2layers} is that the functions $\operatorname{FD}_t(k) = \lim\limits_{n\to\infty} \ftdim{t}{\cube{n}{1,k}}$ and $\operatorname{FLD}_t(k) = \lim\limits_{n\to\infty} \fltdim{t}{\cube{n}{1,k}}$ are well-defined for every integer $t\geq 2$. Theorem~\ref{thm:frac2layers} establishes the exact value of $\operatorname{FD}_2(k)$, and shows that it is equal to $(e-o(1))(k+1)$ as $k\to\infty$. Brightwell and Scheinerman's results in~\cite{brightwellscheinerman} give a lower bound for $\operatorname{FD}_t(k)$, and Smith and Trotter's results in~\cite{smithtrotter} give a lower bound for $\operatorname{FLD}_t(k)$.

\begin{question}
What is the exact value of $\operatorname{FD}_t(k)$ and $\operatorname{FLD}_t(k)$, for all integers $t\geq 2$ and $k\in\N$?
\end{question}

 Let $\operatorname{MFD}_t(\Delta)$ be the supremum of $\ftdim{t}{P}$ over all posets whose comparability graphs have maximum degree $\Delta$. Similarly, let $\operatorname{MFLD}_t(\Delta)$ be the supremum of $\fltdim{t}{P}$ over all posets whose comparability graphs have maximum degree $\Delta$. It follows from Corollary~\ref{cor:MFD} that these functions are well-defined.

\begin{question}
What is the exact value of $\operatorname{MFD}_t(\Delta)$ and $\operatorname{MFLD}_t(\Delta)$, for all integers $t\geq 2$ and $\Delta\in\N$?
\end{question}

 We hope to solve these problems in the near future.

\bibliography{localtwodim}{}
\bibliographystyle{plain}
\end{document}